\newcommand{\C}{\mathbb{C}}
\newcommand{\N}{\mathbb{N}}
\newcommand{\mytilde}{{\raise.17ex\hbox{$\scriptstyle\mathtt{\sim}$}}\xspace}
\newtheorem{definition}{Definition}[section]
\newtheorem{theorem}[definition]{Theorem}
\newtheorem{corollary}[definition]{Corollary}
\newtheorem{lemma}[definition]{Lemma}
\theoremstyle{remark}
\title{Cowen-Douglas tuples and fiber dimensions}
\author{J\"{o}rg Eschmeier and Sebastian Langend\"{o}rfer}
\date{}
\begin{document}

\maketitle

\begin{center}
\parbox{11cm}{\small
\textbf{Abstract.} Let $T \in L(X)^n$ be a Cowen-Douglas system on a Banach space $X$. We use
functional representations of $T$
to associate with each $T$-invariant subspace $Y\subseteq X$ an integer called the fiber dimension
${\rm fd}(Y)$ of $Y$. 
Among other results we prove a limit formula for the fiber dimension, show that it is invariant under 
suitable changes of $Y$ and deduce
a dimension formula for pairs of homogeneous invariant subspaces of graded Cowen-Douglas tuples on Hilbert spaces.\\

\emph{2010 Mathematics Subject Classification:} 47A13, 47A45, 47A53, 47A15\\
\emph{Key words:} Cowen-Douglas tuples, fiber dimension, Samuel multiplicity, holomorphic model spaces}
\end{center}

\vspace{.8cm}

\section{Introduction}

Let $\Omega \subseteq \mathbb C^n$ be a domain and let $\mathcal H \subseteq \mathcal O(\Omega,\mathbb C^N)$ 
be a functional Hilbert space of $\mathbb C^N$-valued analytic functions on $\Omega$. The number
\[
{\rm fd} (\mathcal H) = {\rm max}_{\lambda \in \Omega} \dim \mathcal H_{\lambda},
\]
 where $ H_{\lambda} = \{f(\lambda); f \in \mathcal H\}$, is usually referred to as the fiber dimension of
 $\mathcal H$. Results going back to Cowen and Douglas \cite{CD}, Curto and Salinas \cite{CS} show that each
 Cowen-Douglas operator tuple $T \in L(H)^n$ on a Hilbert space $H$ is locally uniformly equivalent to the 
 tuple $M_z = (M_{z_1}, \ldots ,M_{z_n}) \in L(\mathcal H)^n$ of multiplication operators with the coordinate
 functions on a suitable analytic functional Hilbert space $\mathcal H$. In the present note we use corresponding
 model theorems for Cowen-Douglas operator tuples $T \in L(X)^n$ on Banach spaces to associate
 with each $T$-invariant subspace $Y \subseteq X$ an integer ${\rm fd}(Y)$ called
 the fiber dimension of $Y$. We thus extend results proved by L. Chen, G. Cheng and X. Fang in \cite{CCF}
 for single Cowen-Douglas operators on Hilbert spaces to the case of commuting operator systems on Banach
 spaces.\\
 
 By definition a commuting tuple $T = (T_1, \ldots ,T_n) \in L(X)^n$ of bounded operators on a Banach 
 space $X$ is a weak dual Cowen-Douglas tuple of rank $N \in \mathbb N$ on $\Omega$ if
 \[
\dim X/\sum_{i=1}^n (\lambda_i - T_i) X = N
\]
for each point $\lambda \in \Omega$. We call $T$ a dual Cowen-Douglas tuple if in addition
\[
\bigcap_{\lambda \in \Omega} \sum_{i=1}^n (\lambda_i - T_i) X = \{0\}.
\]
We show that weak dual Cowen-Douglas tuples $T \in L(X)^n$ admit local representations as multiplication
tuples $M_z \in L(\hat{X})^n$ on suitable functional Banach spaces $\hat{X}$ and prove that dual Cowen-Douglas
tuples can  be characterized as those commuting tuples $T \in L(X)^n$ that are locally jointly similar to a
multiplication tuple $M_z \in L(\hat{X})^n$ on a divisible holomorphic model space $\hat{X}$. We use the
functional representations of weak dual Cowen-Douglas tuples $T \in L(X)^n$ to associate with each linear
$T$-invariant subspace $Y \subseteq X$ an integer ${\rm fd}(Y)$ called the fiber dimension of $Y$.\\

Based on the observation that the fiber dimension ${\rm fd}(Y)$ of a closed $T$-invariant subspace 
$Y \in \; {\rm Lat}(T)$ is closely related to the Samuel multiplicity of the quotient tuple $S = T/Y \in
L(X/Y)^n$ on $\Omega$ we show that the fiber dimension of $Y \in {\rm Lat}(T)$ can be calculated by a limit formula
\[
{\rm fd}(Y) = n! \; \lim_{k \rightarrow \infty} \frac{\dim(Y+M_k(T-\lambda)/M_k(T-\lambda))}{k^n} \quad (\lambda \in \Omega),
\]
where $M_k(T-\lambda) = \sum_{|\alpha|=k} (T-\lambda)^{\alpha}X$. Furthermore, we show how to calculate the
fiber dimension using the sheaf model of $T$ on $\Omega$. We deduce that the fiber dimension is invariant against
suitable changes of $Y$ and we show that the fiber dimension for graded dual Cowen-Douglas tuples $T \in L(H)^n$ on
Hilbert spaces satisfies the dimension formula
\[
{\rm fd}(Y_1 \vee Y_2) +{\rm fd}(Y_1 \cap Y_2) = {\rm fd}(Y_1) + {\rm fd}(Y_2)
\]
for any pair of homogeneous invariant subspaces $Y_1, Y_2 \in {\rm Lat}(T)$. The proof is based on an idea from \cite{CF}
(see also \cite{CCF}) where a corresponding result is proved for analytic functional Hilbert spaces given by a complete
Nevanlinna-Pick kernel.

\section{Fiber dimension for invariant subspaces}

In the following, let $\Omega \subseteq \C^n$ be a domain, that is, a connected open set in $\C^n$. Let $D$ be a finite-dimensional vector space and let $M \subseteq \mathcal{O}(\Omega, D)$ be a $\C [z]$-submodule. 
We denote the point evaluations on $M$ by 
\begin{displaymath}
\epsilon_{\lambda}: M \rightarrow D, f \mapsto f(\lambda) \quad \quad (\lambda \in \Omega).
\end{displaymath}

For $\lambda \in \Omega$, the range of $\epsilon_{\lambda}$ is a linear subspace

\begin{displaymath}
M_{\lambda} = \{ f(\lambda); f \in M \} \subseteq D.
\end{displaymath}

\begin{definition}
\label{fibermod}
The number

\begin{displaymath}
{\rm fd}(M) = \max_{z \in \Omega} \dim M_z
\end{displaymath}

is called the fiber dimension of $M$. A point $z_0 \in \Omega$ with $\dim M_{z_0} = {\rm fd}(M)$ is called a maximal point of $M$.

\end{definition}

For any $\C[z]$-submodule $M \subseteq \mathcal{O}(\Omega, D)$ and any point $\lambda \in \Omega$, we have 
\begin{displaymath}
\sum_{i=1}^n (\lambda_i - M_{z_i})M \subseteq \ker \epsilon_{\lambda}.
\end{displaymath}
Under the condition that the codimension of $\sum_{i=1}^n (\lambda_i - M_{z_i})M$ is constant on $\Omega$, the question whether equality holds here is closely related to corresponding properties of the fiber dimension of $M$.

\begin{lemma}
\label{kernelpointev}

Consider a $\C[z]$-submodule $M \subseteq \mathcal{O}(\Omega, D)$ such that there is an integer $N$ with
\begin{displaymath}
\dim M / \sum_{i=1}^n (\lambda_i - M_{z_i})M  \equiv N
\end{displaymath}
for all $\lambda \in \Omega$. Then ${\rm fd}(M) \leq N$. If ${\rm fd}(M) < N$, then
\begin{displaymath}
\sum_{i=1}^n (\lambda_i - M_{z_i})M \subsetneq \ker \epsilon_{\lambda}
\end{displaymath}
for all $\lambda \in \Omega$. If ${\rm fd}(M) = N$, then there is a proper analytic set $A \subseteq \Omega$ with 
\begin{displaymath}
\Omega \backslash A \subseteq \{ \lambda \in \Omega; \dim M_{\lambda} = N \} = \{ \lambda \in \Omega; \sum_{i=1}^n (\lambda_i - M_{z_i})M = \ker \epsilon_{\lambda} \}.
\end{displaymath}

\end{lemma}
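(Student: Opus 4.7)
The argument rests on the canonical surjection
\[
\pi_\lambda: M/\sum_{i=1}^n (\lambda_i - M_{z_i})M \twoheadrightarrow M/\ker\epsilon_\lambda \cong M_\lambda,
\]
which exists because $\sum_{i=1}^n(\lambda_i - M_{z_i})M \subseteq \ker\epsilon_\lambda$. Since the domain has dimension $N$, this immediately yields $\dim M_\lambda \leq N$, hence ${\rm fd}(M) \leq N$. Moreover, comparing finite dimensions, we see at once that $\sum_{i=1}^n(\lambda_i - M_{z_i})M = \ker\epsilon_\lambda$ if and only if $\dim M_\lambda = N$.

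Part two of the claim is then almost a formality: if $\sum_{i=1}^n(\lambda_i - M_{z_i})M = \ker\epsilon_\lambda$ at some $\lambda\in\Omega$, then $\dim M_\lambda = N > {\rm fd}(M)$, a contradiction. So under the hypothesis ${\rm fd}(M) < N$, the inclusion is strict everywhere.

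The substantive content lies in part three. Assume ${\rm fd}(M) = N$ and pick a maximal point $z_0 \in \Omega$ and functions $f_1,\dots,f_N \in M$ whose values $f_1(z_0),\dots,f_N(z_0)$ form a basis of $M_{z_0}$. I would then consider the holomorphic map
\[
F: \Omega \to L(\C^N, D),\quad \lambda \mapsto (c_1,\dots,c_N) \mapsto \sum_{j=1}^N c_j f_j(\lambda),
\]
and define $A \subseteq \Omega$ as the common zero set of all $N\times N$ minors of $F$ with respect to any fixed basis of $D$. Then $A$ is analytic, and $A \neq \Omega$ because $F(z_0)$ has rank $N$. For $\lambda \in \Omega \setminus A$, the values $f_1(\lambda),\dots,f_N(\lambda)$ are linearly independent, so $\dim M_\lambda \geq N$, and combined with the upper bound ${\rm fd}(M) = N$ we obtain $\dim M_\lambda = N$. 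Applying the equivalence from the first paragraph then gives $\sum_{i=1}^n(\lambda_i - M_{z_i})M = \ker\epsilon_\lambda$ on $\Omega\setminus A$, completing the proof.

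The only genuinely non-formal step is the construction of $A$; everything else is linear algebra on the surjection $\pi_\lambda$. I do not anticipate any real obstacle, since the existence of a maximal point and the $N$ functions witnessing maximality is built into the hypothesis ${\rm fd}(M)=N$.
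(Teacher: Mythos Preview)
Your proof is correct and follows essentially the same approach as the paper: the same canonical surjection $\pi_\lambda$ handles the first two parts, and the equality of the two sets in the third part is derived identically. The only difference is that where the paper invokes an external result (Lemma~1.4 in \cite{HSFT}) for the existence of the proper analytic set $A$, you spell out the standard construction via the vanishing locus of the $N\times N$ minors of $F$; this is precisely the content of that cited lemma, so the arguments coincide.
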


\begin{proof}
Since the maps
\begin{displaymath}
M / \sum_{i=1}^n (\lambda_i - M_{z_i})M \rightarrow M / \ker \epsilon_{\lambda} \cong {\rm Im}\, \epsilon_{\lambda}, [m] \mapsto [m]
\end{displaymath}
are surjective for $\lambda \in \Omega$, it follows that ${\rm fd}(M) \leq N$ and that
\begin{displaymath}
\{ \lambda \in \Omega; \dim M_{\lambda} = N \} = \{ \lambda \in \Omega; \sum_{i=1}^n (\lambda_i - M_{z_i})M = \ker \epsilon_{\lambda} \}.
\end{displaymath}
Hence, if ${\rm fd}(M) < N$, then $\sum_{i = 1}^n (\lambda_i - M_{z_i}) M \subsetneq \ker \epsilon_{\lambda}$ for all $\lambda \in \Omega$. A standard argument (cf. Lemma 1.4 in \cite{HSFT} and its proof) shows that there is a proper analytic set $A \subseteq \Omega$ such that 
\begin{displaymath}
\Omega \backslash A \subseteq \{ \lambda \in \Omega; \dim M_{\lambda} = {\rm fd}(M) \}.
\end{displaymath}
This observation completes the proof.
\end{proof}

In \cite{CCF} a fiber dimension was defined for invariant subspaces of dual Cowen-Douglas operators on Hilbert spaces. In the following we extend this definition to the case of weak dual Cowen-Douglas tuples on Banach spaces (see Definition \ref{CD}).

Let $X$ be a Banach space and let $T = (T_1,...,T_n) \in L(X)^n$ be a commuting tuple of bounded operators on $X$. For $z \in \C^n$, we use the notation $z - T$ both for the commuting tuple $z-T= (z_1 - T_1,...,z_n - T_n)$ and for the row operator
\begin{displaymath}
z - T: X^n \rightarrow X, (x_i)_{i=1}^n \mapsto \sum_{i=1}^n (z_i - T_i)x_i.
\end{displaymath}
With this notation, we have $\sum_{i=1}^n (z_i - T_i) X = \text{Im}(z - T)$. 

\begin{definition}
\label{CD}

Let $T \in L(X)^n$ be a commuting tuple of bounded operators on $X$ and let $\Omega \subseteq \C^n$ be a fixed domain.
We call $T$ a weak dual Cowen-Douglas tuple of rank $N \in \N$ on $\Omega$ if
\begin{displaymath}
\dim (X / \sum_{i=1}^n (z_i - T_i) X) = N
\end{displaymath}
for all $z \in \Omega$.
If in addition the condition
\begin{displaymath}
\bigcap_{z \in \Omega} {\rm Im}(z-T) = \{ 0 \}
\end{displaymath}
holds, then $T$ is called a dual Cowen-Douglas tuple of rank $N$ on $\Omega$.

\end{definition}

If $X = H$ is a Hilbert space, then a tuple $T \in L(H)^n$ is a dual Cowen-Douglas tuple on $\Omega$ if and only if the adjoint
$T^* = (T_1^*, \ldots , T_n^*)$ is a tuple of class $B_n(\Omega^*)$ on the complex conjugate domain $\Omega^* = \{\overline{z} ; z \in \Omega \}$
in the sense of Curto and Salinas \cite{CS}.
One can show (Theorem 4.12 in \cite{Wernet}) that, for a weak dual Cowen-Douglas tuple $T \in L(X)^n$ on a domain $\Omega \subseteq \C^n$, the identity
\begin{displaymath}
\bigcap_{z \in \Omega} \text{Im}(z - T) = \bigcap_{k=0}^{\infty} \sum_{\vert \alpha \vert = k} (\lambda - T)^{\alpha}X
\end{displaymath}
holds for every point $\lambda \in \Omega$. In particular, if $T \in L(X)^n$ is a dual Cowen-Douglas tuple on $\Omega$, then it is a dual Cowen-Douglas tuple on each smaller domain $\emptyset \neq \Omega_0 \subseteq \Omega$.

\begin{definition}
\label{HMS}
Let $\Omega \subseteq \C^n$ be open. A holomorphic model space of rank $N$ over $\Omega$ is a Banach space $\hat{X} \subseteq \mathcal{O}(\Omega, D)$ such that $D$ is an $N$-dimensional complex vector space and
\begin{enumerate}[{\rm (i)}]
\item $M_z \in L(\hat{X})^n$,
\item for each $\lambda \in \Omega$, the point evaluation $\epsilon_{\lambda} : \hat{X} \rightarrow D, \hat{x} \mapsto \hat{x}(\lambda)$, is continuous and surjective.
\end{enumerate}
A holomorphic model space $\hat{X}$ on $\Omega$ is called divisible if in addition, for $\hat{x} \in \hat{X}$ and $\lambda \in \Omega$ with $\hat{x}(\lambda) = 0$, there are functions $\hat{y}_1$,...,$\hat{y}_n \in \hat{X}$ with
\begin{displaymath}
\hat{x} = \sum_{i=1}^n (\lambda_i - M_{z_i})\hat{y_i}.
\end{displaymath}
\end{definition}

The multiplication tuple $M_z$ on a divisible holomorphic model space $\hat{X} \subseteq \mathcal{O}(\Omega, D)$ is 
easily seen to be a dual Cowen-Douglas tuple of rank $N = \text{dim} D$ on $\Omega$.

In the following let $T \in L(X)^n$ be a weak dual Cowen-Douglas tuple of rank $N$ on a fixed domain $\Omega \subseteq \C^n$. 
We extend a notion introduced in \cite{CCF} to our setting.

\begin{definition}
Let $\emptyset \neq \Omega_0 \subseteq \Omega$ be a connected open subset. A CF-representation of $T$ on $\Omega_0$ is a $\C[z]$-module homomorphism
\begin{displaymath}
\rho: X \rightarrow \mathcal{O}(\Omega_0, D)
\end{displaymath}
with a finite-dimensional complex vector space $D$ such that
\begin{enumerate}[{\rm (i)}]
\item $\ker \rho = \bigcap_{z \in \Omega} (z - T) X^n$,
\item the submodule $\hat{X} = \rho X \subseteq \mathcal{O}(\Omega_0, D)$ satisfies
\begin{displaymath}
{\rm fd}(\hat{X}) = \dim \hat{X} / \sum_{i=1}^n (\lambda_i - M_{z_i}) \hat{X}
\end{displaymath}
for all $\lambda \in \Omega_0$.
\end{enumerate}
\end{definition}

Let $\mathcal O(\Omega_0,D)$ be equipped with its canonical Fr\'{e}chet space topology.
Our first aim is to show that weak dual Cowen-Douglas tuples possess sufficiently many CF-representations
that are continuous and satisfy certain additional properties.

\begin{theorem}
\label{CFexists}
Let $T \in L(X)^n$ be a weak dual Cowen-Douglas tuple of rank $N$ on $\Omega$. Then, for each point $\lambda_0 \in \Omega$, there is a CF-representation $\rho: X \rightarrow \mathcal{O}(\Omega_0, D)$ of $T$ on a connected open neighbourhood $\Omega_0 \subseteq \Omega$ of $\lambda_0$ such that
\begin{enumerate}[{\rm (i)}]
\item $\rho: X \rightarrow \mathcal{O}(\Omega_0, D)$ is continuous,
\item $\hat{X} = \rho(X)$ equipped with the norm $\Vert \rho(X) \Vert = \Vert x + \ker \rho \Vert$ is a divisible holomorphic model space of rank $N$ on $\Omega_0$.
\end{enumerate}
\end{theorem}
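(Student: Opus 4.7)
The plan is to construct $\rho$ by reading off, in a suitable basis, the coordinates of $x$ modulo $\mathrm{Im}(z-T)$, and to ensure that these coordinates depend holomorphically on $z$ by a perturbation-of-a-right-inverse argument. Choose vectors $x_1,\dots,x_N\in X$ whose cosets form a basis of the $N$-dimensional quotient $X/\mathrm{Im}(\lambda_0-T)$ and set $D=\mathbb C^N$. Consider the bounded linear map
\[
\Phi_z\colon D\oplus X^n\to X,\qquad (c,y)\mapsto \sum_{j=1}^N c_j x_j+(z-T)y,
\]
which depends polynomially on $z$. At $z=\lambda_0$ it is surjective, so by the open mapping theorem it has a bounded right inverse $\sigma\colon X\to D\oplus X^n$.

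The main technical step is to promote $\sigma$ to a holomorphic family of right inverses near $\lambda_0$. Writing $\Phi_z\sigma=\mathrm{id}_X+(\Phi_z-\Phi_{\lambda_0})\sigma$ and noting that $(\Phi_z-\Phi_{\lambda_0})(c,y)=\sum_i(z_i-\lambda_{0,i})y_i$, on a small enough polydisc $\Omega_0\subseteq\Omega$ around $\lambda_0$ the perturbation $(\Phi_z-\Phi_{\lambda_0})\sigma$ has operator norm strictly less than one. A Neumann series argument then gives a boundedly invertible, holomorphic family $(\Phi_z\sigma)^{-1}$, and $\sigma_z:=\sigma(\Phi_z\sigma)^{-1}$ is a holomorphic family of bounded right inverses of $\Phi_z$. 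In particular $\Phi_z$ is surjective on all of $\Omega_0$, and together with $\dim X/\mathrm{Im}(z-T)=N$ this forces the cosets of $x_1,\dots,x_N$ to be a basis of $X/\mathrm{Im}(z-T)$ for every $z\in\Omega_0$.

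Given $x\in X$ write $\sigma_z(x)=(c(x,z),y(x,z))$ and set $\rho(x)(z)=c(x,z)\in D$. Because the coordinates $c(x,z)$ are uniquely determined by the relation $x=\sum_j c_j(z)\,x_j+(z-T)y(z)$, the value $\rho(x)(z)$ is independent of the choice of right inverse; holomorphy of $\sigma_z$ then yields a continuous linear map $\rho\colon X\to\mathcal O(\Omega_0,D)$. The congruence $T_ix\equiv z_ix\pmod{\mathrm{Im}(z-T)}$ gives $\rho(T_ix)=z_i\rho(x)$, so $\rho$ is a $\mathbb C[z]$-module homomorphism, and $x\in\ker\rho$ exactly when $x\in\mathrm{Im}(z-T)$ for every $z\in\Omega_0$. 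By the identity attributed to \cite{Wernet} after Definition \ref{CD} this intersection coincides with $\bigcap_{z\in\Omega}\mathrm{Im}(z-T)$, as required in condition (i) of a CF-representation.

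It remains to check that $\hat X=\rho(X)$, endowed with the quotient norm $\|\rho(x)\|=\|x+\ker\rho\|$, is a divisible holomorphic model space of rank $N$ on $\Omega_0$, and to verify the CF-identity. Continuity of $\epsilon_\lambda$ is immediate from continuity of $\rho$, and surjectivity holds since $\epsilon_\lambda\rho(x_j)=e_j$; boundedness of $M_{z_i}$ on $\hat X$ follows from $M_{z_i}\rho(x)=\rho(T_ix)$ and the quotient norm. The essential point is divisibility: if $\rho(x)(\lambda)=0$ then $x\in\mathrm{Im}(\lambda-T)$, so $x=\sum_i(\lambda_i-T_i)y_i$, and applying $\rho$ yields $\rho(x)=\sum_i(\lambda_i-M_{z_i})\rho(y_i)$. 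Divisibility combined with surjectivity of $\epsilon_\lambda$ onto the $N$-dimensional space $D$ then gives $\ker\epsilon_\lambda=\sum_i(\lambda_i-M_{z_i})\hat X$ and hence $\mathrm{fd}(\hat X)=N=\dim\hat X/\sum_i(\lambda_i-M_{z_i})\hat X$ for every $\lambda\in\Omega_0$. I expect the construction of the holomorphic family $\sigma_z$ to be the only genuine obstacle; the rest is a matter of unwinding the definitions.
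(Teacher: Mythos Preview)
Your proof is correct and follows essentially the same route as the paper's: both read off the $D$-component of $x$ modulo $\mathrm{Im}(z-T)$ and argue that this depends holomorphically on $z$, then verify the module, kernel, divisibility and continuity properties in the same order. The only visible difference is that where the paper cites Lemma~2.1.5 in \cite{EP} to obtain surjectivity of the induced map on analytic sections (and thereby the holomorphic dependence of $x(z)$), you carry out the underlying perturbation argument explicitly via a Neumann series to build a holomorphic family of right inverses $\sigma_z$; this is precisely one standard way of proving that lemma in the surjective case, so the two arguments are interchangeable rather than genuinely different.
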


\begin{proof}
Let $\lambda_0 \in \Omega$ be arbitrary. Choose a linear subspace $D \subseteq X$ such that 
\begin{displaymath}
X = (\lambda_0 - T)X^n \oplus D.
\end{displaymath}
Then $\dim D = N$. The analytically parametrized complex
\begin{displaymath}
T(z): X^n \oplus D \rightarrow X,\;  ((x_i)_{i=1}^n, y) \mapsto \sum_{i=1}^n (z_i - T_i)x_i + y
\end{displaymath}
of bounded operators between Banach spaces is onto at $z = \lambda_0$. By Lemma 2.1.5 in \cite{EP} there is an open polydisc $\Omega_0 \subseteq \Omega$ such that the induced map
\begin{displaymath}
\mathcal{O}(\Omega_0, X^n \oplus D) \rightarrow \mathcal{O}(\Omega_0, X), ((g_i)_{i=1}^n, h) \mapsto \sum_{i=1}^n (z_i - T_i)g_i + h 
\end{displaymath}
is onto. In particular, for each $z \in \Omega_0$, the linear map 
\begin{displaymath}
D \rightarrow X / \sum_{i=1}^n (z_i - T_i)X, x \mapsto [x]
\end{displaymath}
 is surjective between $N$-dimensional complex vector space. Hence these maps are isomorphisms. But then, for each $x \in X$ and $z \in \Omega_0$, there is a unique vector $x(z) \in D$ with $x - x(z) \in \sum_{i=1}^n (z_i - T_i) X$. By construction, for each $x \in X$, the mapping $\Omega_0 \rightarrow D, z \mapsto x(z)$, is analytic. The induced mapping 
 \begin{displaymath}
 \rho: X \rightarrow \mathcal{O}(\Omega_0, D), x \mapsto x(\cdot)
 \end{displaymath}
 is linear with
 \begin{displaymath}
 \ker \rho = \bigcap_{z \in \Omega_0} \sum_{i=1}^n (z_i - T_i)X = \bigcap_{z \in \Omega} \sum_{i = 1}^n (z_i - T_i)X.
 \end{displaymath}
For $x \in X$, $z \in \Omega_0$ and $j = 1,...,n$,
 \begin{displaymath}
 T_j x - z_j x(z) = T_j (x - x(z)) - (z_j - T_j)x(z) \in \sum_{i=1}^n (z_i - T_i)X.
 \end{displaymath}
Hence $\rho$ is a $\C[z]$-module homomorphism. Equipped with the norm $\Vert \rho(x) \Vert = \Vert x + \ker \rho \Vert$, the space $\hat{X} = \rho (X)$ is a Banach space and $M_z \in L(\hat{X})^n$ is a commuting tuple of bounded operators on $\hat{X}$. By definition
 \begin{displaymath}
 \rho(x) \equiv x \quad \quad \text{ for } x \in D.
 \end{displaymath}
 Hence the point evaluations $\epsilon_z: \hat{X} \rightarrow D$ ($z \in \Omega_0$) are surjective. Since the mappings
 \begin{displaymath}
 q_z: D \rightarrow X / \sum_{i=1}^n (z_i - T_i) X,\;  x \mapsto [x] \quad (z \in \Omega_0)
 \end{displaymath}
 are topological isomorphisms and since the compositions
 \begin{displaymath}
 X \rightarrow X / \sum_{i=1}^n (z_i - T_i) X,\; x \mapsto q_z(\epsilon_z(\rho(x))) = [x]
 \end{displaymath}
 are continuous, it follows that the point evaluations $\epsilon_z: \hat{X} \rightarrow D$ ($z \in \Omega_0$) are continuous. 
 Thus we have shown that $\hat{X} \subseteq \mathcal{O}(\Omega_0,D)$ with the norm induced by $\rho$ is a holomorphic model space. \\
 To see that $\hat{X}$ is divisible, fix a vector $x \in X$ and a point $\lambda \in \Omega_0$ such that $x(\lambda) = 0$. 
 Then there are vectors $x_1,...,x_n \in X$ with $x = \sum_{i=1}^n (\lambda_i - T_i)x_i$. Hence
 \begin{displaymath}
\rho(x) = \sum_{i=1}^n (\lambda_i - z_i) \rho(x_i) \in \sum_{i=1}^n (\lambda_i - M_{z_i}) \hat{X}.
\end{displaymath} 
To conclude the proof, it suffices to observe that
\begin{displaymath}
\dim (\hat{X} / \sum_{i=1}^n (\lambda_i - M_{z_i})\hat{X}) = \dim(\hat{X} / \ker \epsilon_{\lambda}) = \dim(\text{Im}\,\epsilon_{\lambda}) = \dim D = N
\end{displaymath}
for all $z \in \Omega_0$. 
\end{proof}

Note that, for a dual Cowen-Douglas tuple $T \in L(X)^n$ on a Banach space $X$, the mappings $\rho: X \rightarrow \hat{X} \subseteq \mathcal{O}(\Omega_0,D)$ 
constructed in the previous proof are isometric joint similarities between $T \in L(X)^n$ and the tuples $M_z \in L(\hat{X})^n$ on the divisible 
holomorphic model space $\hat{X} \subseteq \mathcal{O}(\Omega_0,D)$. 

\begin{corollary}
\label{quotient}
Let $T \in L(X)^n$ be a commuting tuple on a complex Banach space and let $\Omega \subseteq \C^n$ be a domain. The tuple $T$ is a dual Cowen-Douglas tuple of rank $N$ on $\Omega$ if and only if, for each point $\lambda \in \Omega$, there exist a connected open neighbourhood $\Omega_0 \subseteq \Omega$ of $\lambda$ and a joint similarity between $T$ and the multiplication tuple $M_z \in L(\hat{X})^n$ on a divisible holomorphic model space $\hat{X}$ of rank $N$ on $\Omega_0$.
\end{corollary}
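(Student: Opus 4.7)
The statement is an equivalence; the forward implication ($\Rightarrow$) essentially unpacks the remark following Theorem \ref{CFexists}, while the backward implication ($\Leftarrow$) reduces to verifying the two defining conditions of Definition \ref{CD} directly from the local similarity hypothesis.

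For $\Rightarrow$, I fix $\lambda \in \Omega$ and invoke Theorem \ref{CFexists} to obtain a CF-representation $\rho: X \to \hat{X} \subseteq \mathcal{O}(\Omega_0, D)$ onto a divisible holomorphic model space of rank $N$ on some connected open neighbourhood $\Omega_0$ of $\lambda$. The new input under the dual (as opposed to merely weak dual) Cowen-Douglas hypothesis is that $\ker \rho = \bigcap_{z \in \Omega} {\rm Im}(z-T) = \{0\}$; hence $\rho$ is injective and, equipped with the norm $\|\rho(x)\| = \|x + \ker \rho\| = \|x\|$, it is an isometric isomorphism onto $\hat X$. Being a $\C[z]$-module homomorphism, $\rho$ intertwines $T$ with $M_z$, which is precisely the desired (and in fact isometric) joint similarity.

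For $\Leftarrow$, I verify both conditions of Definition \ref{CD}. For the rank condition, given any $z_0 \in \Omega$, I apply the hypothesis at $\lambda = z_0$ to obtain a similarity $\phi: X \to \hat{X}$ on some neighbourhood $\Omega_0$ of $z_0$; the intertwining property $\phi(z_0-T) = (z_0-M_z)\phi$ gives $X/{\rm Im}(z_0-T) \cong \hat{X}/{\rm Im}(z_0-M_z)$, and divisibility together with the surjectivity of $\epsilon_{z_0}$ identifies the right-hand side with $D$, whose dimension is $N$. For the intersection condition it is enough to use the similarity $\phi$ at any single point $\lambda_0 \in \Omega$ with neighbourhood $\Omega_0$ and to observe
\[
\bigcap_{z \in \Omega}{\rm Im}(z-T) \subseteq \bigcap_{z \in \Omega_0}{\rm Im}(z-T) = \phi^{-1}\!\Big(\bigcap_{z \in \Omega_0}{\rm Im}(z-M_z)\Big) = \phi^{-1}\!\Big(\bigcap_{z \in \Omega_0}\ker \epsilon_z\Big) = \{0\},
\]
where the middle equality uses divisibility of $\hat X$ and the last one is the identity principle: any analytic function on the non-empty open set $\Omega_0$ vanishing pointwise on $\Omega_0$ is the zero element of $\hat X$.

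The main obstacle sits not in the corollary itself but in Theorem \ref{CFexists}, which does all the heavy lifting; what remains is only the identification of $\rho$ as an isometric bijection under the dual (rather than weak dual) hypothesis in $\Rightarrow$, and in $\Leftarrow$ the bookkeeping of similarities combined with the use of divisibility and the identity principle for holomorphic functions.
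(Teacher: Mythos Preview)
Your proposal is correct and follows essentially the same route as the paper. For $\Rightarrow$ the paper likewise invokes Theorem~\ref{CFexists} together with the remark that $\ker\rho=\{0\}$ under the dual hypothesis, and for $\Leftarrow$ the paper simply appeals to the fact (stated after Definition~\ref{HMS}) that $M_z$ on a divisible model space of rank $N$ is a dual Cowen--Douglas tuple and that this property passes through similarities; your argument just unpacks that fact by verifying the two conditions of Definition~\ref{CD} explicitly, using divisibility and the identity principle exactly as one must to justify the cited remark.
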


\begin{proof}
The necessity of the stated condition follows from Theorem \ref{CFexists} and the subsequent remarks. Since the tuple $M_z \in L(\hat{X})^n$ on a divisible holomorphic model space of rank $N$ is a dual Cowen-Douglas tuple of rank $N$, and since the same is true for every tuple similar to $M_z \in L(\hat{X})^n$, also the sufficiency is clear.
\end{proof}

The preceding result should be compared with Corollary 4.39 in \cite{Wernet}, where a characterization of dual Cowen-Douglas tuple on suitable admissible domains in $\C^n$ is obtained.

There is a canonical way to associate with each weak dual Cowen-Douglas tuple of rank $N$ on $\Omega \subseteq \C^n$ a dual Cowen-Douglas tuple of rank $N$. 

\begin{corollary}
Let $T \in L(X)^n$ be a weak dual Cowen-Douglas tuple of rank $N$ on a domain $\Omega \subseteq \C^n$. Then the quotient tuple
\begin{displaymath}
T^{\text{CD}} = T / \bigcap_{z \in \Omega} \sum_{i=1}^n (z_i - T_i) X
\end{displaymath}
defines a dual Cowen-Douglas tuple of rank $N$ on $\Omega$.
\end{corollary}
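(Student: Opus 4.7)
Plan for the proof of the Corollary.

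Set $K = \bigcap_{z \in \Omega} \sum_{i=1}^n (z_i - T_i) X = \bigcap_{z \in \Omega} \mathrm{Im}(z-T)$. The first task is to check that $T^{\mathrm{CD}}$ is even well-defined as a commuting tuple of bounded operators on a Banach space. For this I would verify that $K$ is a closed, $T$-invariant subspace of $X$. Closedness is immediate from the weak dual Cowen-Douglas assumption: each $\mathrm{Im}(z-T)$ has finite codimension $N$ in $X$, hence is closed, and $K$ is an intersection of closed subspaces. For $T$-invariance, fix $x \in K$ and $z \in \Omega$; writing $x = \sum_i (z_i - T_i) y_i$ and using commutativity of $T$, $T_j x = \sum_i (z_i - T_i)(T_j y_i) \in \mathrm{Im}(z-T)$, so $T_j x \in K$. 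Hence $T$ drops to a commuting tuple $T^{\mathrm{CD}}$ on the Banach space $X/K$.

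Next I would compute $\mathrm{Im}(z - T^{\mathrm{CD}})$ in terms of $\mathrm{Im}(z-T)$. Let $\pi : X \to X/K$ be the quotient map. Then
\begin{displaymath}
\sum_{i=1}^n (z_i - T_i^{\mathrm{CD}})(X/K) = \pi\!\left(\sum_{i=1}^n (z_i - T_i) X\right) = \mathrm{Im}(z-T)/K,
\end{displaymath}
where the last equality uses $K \subseteq \mathrm{Im}(z-T)$. To obtain the rank-$N$ condition, I apply the standard isomorphism theorem: for every $z \in \Omega$,
\begin{displaymath}
(X/K)\,\big/\,\mathrm{Im}(z - T^{\mathrm{CD}}) \;=\; (X/K)\,\big/\,(\mathrm{Im}(z-T)/K) \;\cong\; X/\mathrm{Im}(z-T),
\end{displaymath}
which has dimension $N$ by hypothesis. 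Thus $T^{\mathrm{CD}}$ is at least a weak dual Cowen-Douglas tuple of rank $N$ on $\Omega$.

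Finally, for the intersection condition, I would use that $K \subseteq \mathrm{Im}(z-T)$ for every $z \in \Omega$ to commute the intersection with the quotient:
\begin{displaymath}
\bigcap_{z \in \Omega} \mathrm{Im}(z - T^{\mathrm{CD}}) = \bigcap_{z \in \Omega} \mathrm{Im}(z-T)/K = \left(\bigcap_{z \in \Omega} \mathrm{Im}(z-T)\right)\big/K = K/K = \{0\}.
\end{displaymath}
The only nontrivial set-theoretic point here is that intersection really commutes with the quotient when all the subspaces contain $K$; concretely, if $[x] \in \mathrm{Im}(z - T^{\mathrm{CD}})$ then $x \in \mathrm{Im}(z-T) + K = \mathrm{Im}(z-T)$, so the preimages of the quotients intersect in $K$. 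This finishes the proof. There is really no main obstacle — the statement is essentially bookkeeping about the third isomorphism theorem and the fact that the finite-codimension hypothesis forces every $\mathrm{Im}(z-T)$ to be closed — so the only thing one has to be careful about is the well-definedness of the quotient tuple at the outset.
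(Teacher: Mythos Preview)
Your argument is correct. The closedness step is fine because a bounded operator between Banach spaces whose range has finite codimension automatically has closed range; the rest is indeed just the third isomorphism theorem together with the observation that $K$ is contained in every $\mathrm{Im}(z-T)$.

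Your route, however, is different from the paper's. The paper does not check the two defining conditions for $T^{\mathrm{CD}}$ directly. Instead, for each $z_0 \in \Omega$ it takes a CF-representation $\rho: X \to \mathcal{O}(\Omega_0,D)$ from Theorem~\ref{CFexists}, observes that $\ker\rho = K$, so that $\rho$ drops to a joint similarity between $T^{\mathrm{CD}}$ and $M_z$ on the divisible holomorphic model space $\hat{X} = \rho(X)$, and then invokes Corollary~\ref{quotient}. In other words, the paper's proof is a one-line application of the machinery it has just built, whereas yours is a self-contained verification from the definitions. Your approach has the virtue of being elementary and independent of the CF-representation theory; the paper's approach reinforces the point that $T^{\mathrm{CD}}$ is locally similar to a multiplication tuple on a model space, which is conceptually what the surrounding section is about.
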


\begin{proof}
Let $z_0 \in \Omega$ be arbitrary. Choose a CF-representation $\rho: X \rightarrow \mathcal{O}(\Omega_0, D)$ as in Theorem \ref{CFexists}. Then $\hat{X} = \rho(X) \subseteq \mathcal{O}(\Omega_0, D)$ is a divisible holomorphic model space of rank $N$ on $\Omega_0$. Since 
\begin{displaymath}
\ker \rho = \bigcap_{z \in \Omega} \sum_{i=1}^n (z_i - T_i) X,
\end{displaymath}
the mapping $\rho$ induces a similarity between $T^{\text{CD}}$ and $M_z \in L(\hat{X})^n$.
By Corollary \ref{quotient} the tuple $T^{\text{CD}}$ is a dual Cowen-Douglas tuple of rank $N$ on $\Omega$. 
\end{proof}

As before, let $T \in L(X)^n$ be a weak dual Cowen-Douglas tuple of rank $N$ on a domain $\Omega \subseteq \C^n$. We denote by $\text{Lat}(T)$ the set of closed subspaces $Y \subseteq X$ which are invariant under each component $T_i$ of $T$. Our next aim is to show that, for $Y \in \text{Lat}(T)$, the fiber dimension of $Y$ can be defined as 
\begin{displaymath}
\text{fd}(Y) = \text{fd}(\rho(Y)),
\end{displaymath}
where $\rho$ is an arbitrary CF-representation of $T$. We have of course to show that the number $\text{fd}(\rho(Y))$ is independent of the chosen CF-representation $\rho$. In the first step, we use an argument from \cite{CCF} to show that $\text{fd}(\rho_1(Y)) = \text{fd}(\rho_2(Y))$  for each pair of CF-representations $\rho_1$, $\rho_2$ over domains $\Omega_1, \Omega_2 \subseteq \Omega$ with non-trivial intersection.

\begin{lemma}
\label{invariancenontrivial}
Let $\Omega_1, \Omega_2 \subseteq \C^n$ be domains with $\Omega_1 \cap \Omega_2 \neq \emptyset$ and let $M_i \subseteq \mathcal{O}(\Omega_i, D_i)$ be $\C[z]$-submodules with finite-dimensional complex vector spaces $D_i$ such that 
\begin{displaymath}
{\rm fd}(M_i) = \dim M_i / (\lambda - M_z)M_i^n \quad (i=1,2, \lambda \in \Omega_i).
\end{displaymath}
Suppose that there is a $\C[z]$-module isomorphism $U: M_1 \rightarrow M_2$. Then, for any submodule $M \subseteq M_1$, we have
\begin{displaymath}
{\rm fd}(M) = {\rm fd} (UM).
\end{displaymath}
\end{lemma}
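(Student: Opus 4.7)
The plan is to reduce both fiber dimensions to dimensions of the same type of quotient at one carefully chosen common point $\lambda_0 \in \Omega_1 \cap \Omega_2$, and then identify the two quotients via the module isomorphism $U$.

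First I would apply Lemma \ref{kernelpointev} to each $M_i$: the assumption ${\rm fd}(M_i) = \dim M_i / (\lambda - M_z) M_i^n$ puts us in the equality case of that lemma, so we obtain proper analytic sets $B_i \subseteq \Omega_i$ such that $\ker \epsilon_\lambda^{M_i} = (\lambda - M_z) M_i^n$ for all $\lambda \in \Omega_i \setminus B_i$. Second, the standard rank-semicontinuity argument cited in the proof of Lemma \ref{kernelpointev} (Lemma 1.4 in \cite{HSFT}) applied directly to the submodules $M \subseteq \mathcal{O}(\Omega_1, D_1)$ and $UM \subseteq \mathcal{O}(\Omega_2, D_2)$ provides proper analytic sets $A \subseteq \Omega_1$ and $A' \subseteq \Omega_2$ on whose complements $\dim M_\lambda = {\rm fd}(M)$ and $\dim (UM)_\lambda = {\rm fd}(UM)$, respectively. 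Since $\Omega_1 \cap \Omega_2$ is open and non-empty while each of $A, A', B_1, B_2$ has empty interior in its ambient domain, I pick any $\lambda_0 \in \Omega_1 \cap \Omega_2$ outside $A \cup A' \cup B_1 \cup B_2$.

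At this $\lambda_0$, the first isomorphism theorem applied to the restrictions $\epsilon_{\lambda_0}|_M$ and $\epsilon_{\lambda_0}|_{UM}$, combined with the identifications $M \cap \ker \epsilon_{\lambda_0}^{M_1} = M \cap (\lambda_0 - M_z) M_1^n$ and the analogous one for $UM$, yields
\[
{\rm fd}(M) = \dim M / \bigl(M \cap (\lambda_0 - M_z) M_1^n\bigr), \quad {\rm fd}(UM) = \dim UM / \bigl(UM \cap (\lambda_0 - M_z) M_2^n\bigr).
\]
Because $U$ is a $\C[z]$-module isomorphism, it commutes with multiplication by $\lambda_{0,j} - z_j$ and therefore maps $(\lambda_0 - M_z) M_1^n$ bijectively onto $(\lambda_0 - M_z) M_2^n$; consequently it maps the intersection with $M$ bijectively onto the intersection with $UM$. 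Thus $U$ induces an isomorphism of the two quotient spaces displayed above, and the two fiber dimensions agree.

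The main obstacle is the genericity step: one has to ensure that the four exceptional analytic sets --- two from Lemma \ref{kernelpointev} on the two different ambient domains and two from rank semicontinuity applied to the submodules $M$ and $UM$ --- do not jointly cover $\Omega_1 \cap \Omega_2$. Once a single common good point $\lambda_0$ is secured, the remaining identifications are a formal consequence of $U$ being a $\C[z]$-module isomorphism.
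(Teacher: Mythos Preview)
Your proof is correct and follows essentially the same approach as the paper: choose a generic point in $\Omega_1 \cap \Omega_2$, use Lemma~\ref{kernelpointev} to identify $\ker \epsilon_{\lambda_0}^{M_i}$ with $(\lambda_0 - M_z)M_i^n$, and exploit that the $\C[z]$-module isomorphism $U$ respects these submodules. The only cosmetic difference is that the paper builds a surjection $M_\lambda \to (UM)_\lambda$ (using maximality for $M_1$ only) and then appeals to symmetry via $U^{-1}$, whereas you also include the set $B_2$ and obtain an isomorphism of quotients in one stroke.
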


\begin{proof}
Using Lemma 1.4 in \cite{HSFT} and elementary properties of analytic sets, we can choose a proper analytic subset $A \subseteq \Omega_1 \cap \Omega_2$ such that each point $\lambda \in (\Omega_1 \cap \Omega_2) \backslash S$ is a maximal point for $M$, $M_1$ and $UM$. Fix such a point $\lambda$. If $f,g \in M$ are functions with $f(\lambda) = g(\lambda)$, then by Lemma \ref{kernelpointev} applied to $M_1$, there are functions $h_1,...,h_n \in M_1$ such that $f - g = \sum_{i=1}^n (\lambda_i - M_{z_i})h_i$. But then also
\begin{displaymath}
U(f-g) = \sum_{i=1}^n (\lambda_i - M_{z_i})Uh_i.
\end{displaymath}
Hence we obtain a well-defined surjective linear map $U_{\lambda} : M_{\lambda} \rightarrow (UM)_{\lambda}$ by setting
\begin{displaymath}
U_{\lambda} x = (Uf)(\lambda) \text{ if } f \in M \text{ with } f(\lambda) = x.
\end{displaymath}
It follows that $\text{fd}(M) = \dim M_{\lambda} \geq \dim (UM)_{\lambda} = \text{fd} (UM)$. By applying the same argument to $U^{-1}$ and $UM$ instead of $U$ and $M$ we find that also $\text{fd}(UM)\geq \text{fd}(M)$.
\end{proof}

If $\rho_i: X \rightarrow \mathcal{O}(\Omega_i, D_i)$ ($i=1,2$) are CF-representations on domains $\Omega_i \subseteq \Omega$ with non-trival intersection $\Omega_1 \cap \Omega_2 \neq \emptyset$, then the submodules $M_i = \rho_i X \subseteq \mathcal{O}(\Omega_i, D_i)$ are canonically isomorphic
\begin{displaymath}
M_1 \cong X / \ker \rho_1 = X / \ker \rho_2 \cong M_2
\end{displaymath}
as $\C[z]$-modules. As an application of the previous result one obtains that 
\begin{displaymath}
{\rm fd}(\rho_1 Y) = {\rm fd}(\rho_2 Y)
\end{displaymath}
for each linear subspace $Y \subseteq X$ which is invariant for $T$.

\begin{theorem}
\label{Invariance}
Let $\rho_i : X \rightarrow \mathcal{O}(\Omega_i, D_i)$ $(i=1,2)$ be CF-representations of $T$ on domains $\Omega_i \subseteq \Omega$. Then 
\begin{displaymath}
{\rm fd}(\rho_1 Y) = {\rm fd}(\rho_2 Y)
\end{displaymath}
for each linear subspace $Y \subseteq X$ which is invariant for $T$.
\end{theorem}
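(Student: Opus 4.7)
The plan is to reduce everything to the already-established overlap case. If $\Omega_1 \cap \Omega_2 \neq \emptyset$, then by condition (i) in the definition of a CF-representation, $\ker \rho_1 = \ker \rho_2 = \bigcap_{z \in \Omega}(z - T) X^n$, and hence the assignment $\rho_1 x \mapsto \rho_2 x$ defines a well-defined $\C[z]$-module isomorphism $U : \rho_1 X \to \rho_2 X$ which sends $\rho_1 Y$ onto $\rho_2 Y$. Lemma \ref{invariancenontrivial} applied to $U$ and the submodule $M = \rho_1 Y \subseteq \rho_1 X$ then immediately yields ${\rm fd}(\rho_1 Y) = {\rm fd}(\rho_2 Y)$, as already indicated in the paragraph preceding the statement.

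For the general case, in which $\Omega_1$ and $\Omega_2$ need not meet, the idea is to chain CF-representations across the connected domain $\Omega$. Pick points $\lambda_i \in \Omega_i$ ($i = 1,2$). Since $\Omega$ is a domain, hence path-connected, choose a continuous path $\gamma : [0,1] \to \Omega$ from $\lambda_1$ to $\lambda_2$. For each $t \in [0,1]$, Theorem \ref{CFexists} produces a CF-representation $\sigma_t : X \to \mathcal{O}(U_t, E_t)$ on a connected open neighbourhood $U_t \subseteq \Omega$ of $\gamma(t)$, and the polydisc construction in the proof of that theorem shows that $U_t$ may be taken arbitrarily small. Compactness of $\gamma([0,1])$ together with the Lebesgue-number lemma then supplies finitely many parameters $0 = t_0 < t_1 < \cdots < t_m = 1$ with $U_{t_j} \cap U_{t_{j+1}} \neq \emptyset$ for $0 \leq j < m$. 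Using the freedom to shrink, we may additionally arrange $U_{t_0} \subseteq \Omega_1$ and $U_{t_m} \subseteq \Omega_2$.

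Applying the overlap case to each consecutive pair along the chain $\rho_1, \sigma_{t_0}, \sigma_{t_1}, \ldots, \sigma_{t_m}, \rho_2$ gives
\[
{\rm fd}(\rho_1 Y) = {\rm fd}(\sigma_{t_0} Y) = {\rm fd}(\sigma_{t_1} Y) = \cdots = {\rm fd}(\sigma_{t_m} Y) = {\rm fd}(\rho_2 Y),
\]
which is the desired equality.

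The main obstacle is essentially bookkeeping, together with the small verification that the restriction of a CF-representation to a smaller connected open subset of its domain is again a CF-representation. This follows because the kernel is unchanged under such a restriction (holomorphic functions on a connected domain are determined by their values on any non-empty open subset), and because the dichotomy in Lemma \ref{kernelpointev} ensures that the fiber-dimension maximum $N$ is attained on a dense open subset of any non-empty sub-polydisc. Once this routine check is in place, the chain argument carries through exactly as sketched.
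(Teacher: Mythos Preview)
Your proof is correct and follows essentially the same route as the paper: connect $\Omega_1$ and $\Omega_2$ by a path in $\Omega$, cover the path by finitely many overlapping CF-representation domains via Theorem~\ref{CFexists} and a Lebesgue-number argument, and propagate the equality along the chain using Lemma~\ref{invariancenontrivial}. The paper streamlines slightly by taking $\rho_1$ and $\rho_2$ themselves as the endpoint representations of the chain (so your extra $\sigma_{t_0},\sigma_{t_m}$ and the restriction discussion are not needed), but the substance is the same.
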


\begin{proof}
Since $\Omega$ is connected, we can choose a continuous path $\gamma : [0,1] \rightarrow \Omega$ such that $\gamma (0) \in  \Omega_1$ and $\gamma (1) \in \Omega_2$. By Theorem \ref{CFexists} there is a family $(\rho_z)_{z \in \text{Im}\gamma}$ of CF-representations $\rho_z: X \rightarrow \mathcal{O}(\Omega_z, D_z)$ of $T$ on connected open neighbourhoods $\Omega_z \subseteq \Omega$ of the points $z \in \text{Im}\gamma$ such that $\rho_{\gamma(0)} = \rho_1$ and $\rho_{\gamma(1)} = \rho_2$. Using the fact that there is a positive number $\delta > 0$ such that each set $A \subseteq [0,1]$ of diameter less than $\delta$ is completely contained in one of the sets $\gamma^{-1}(\Omega_z)$ (see e.g. Lemma 3.7.2 in \cite{Munkres}), one can choose a sequence of points $z_1 = \gamma(0),z_2,...,z_n = \gamma(1)$  in $\text{Im}\gamma$ such that $\Omega_{z_i} \cap \Omega_{z_{i+1}}\neq \emptyset$ for $i=1,...,n-1$. Let $Y \subseteq X$ be 
a linear $T$-invariant subspace. By the remarks following Lemma \ref{invariancenontrivial} we obtain that
\begin{displaymath}
\text{fd}(\rho_1 Y) = \text{fd}(\rho_{z_2} Y) = ...  = \text{fd} (\rho_2 Y)
\end{displaymath}
as was to be shown.
\end{proof}

Let $T \in L(X)^n$ be a weak dual Cowen-Douglas tuple of rank $N$ on a domain $\Omega \subseteq \C^n$ and let $Y \subseteq X$ be a linear subspace that is invariant for $T$. In view of Theorem \ref{Invariance} we can define the fiber dimension of $Y$ by 
\begin{displaymath}
\text{fd}(Y) = \text{fd}(\rho Y),
\end{displaymath}
where $\rho: X \rightarrow \mathcal{O}(\Omega_0, D)$ is an arbitrary CF-representation of $T$.
We shall mainly be interested in the fiber dimension of closed invariant subspaces $Y \in \text{Lat}(T)$, but the reader should observe that the definition makes perfect sense for linear $T$-invariant subspaces $Y \subseteq X$. Since by Theorem \ref{CFexists} there are always continuous CF-representations $\rho: X \rightarrow \mathcal{O}(\Omega_0, D)$ and since in this case the inclusions
\begin{displaymath}
\epsilon_{\lambda}(\rho(\overline{Y})) \subseteq \overline{\epsilon_{\lambda}(\rho(Y))} = \epsilon_{\lambda}(\rho(Y))
\end{displaymath}
hold for all $\lambda \in \Omega_0$, it follows that ${\rm fd}(Y) = {\rm fd}(\overline{Y})$ for each linear $T$-invariant subspace $Y \subseteq X$.

It follows from Theorem \ref{CFexists} that $\text{fd}(X) = N$. In general, the fiber dimension $\text{fd}(Y)$ of a linear $T$-invariant subspace $Y \subseteq X$ is an integer in $\{ 0,...,N \}$ which depends on $Y$ in a monotone way. Obviously, $\text{fd}(Y) = 0$ if and only if 
\begin{displaymath}
Y \subseteq \ker \rho = \bigcap_{z \in \Omega} (z-T)X^n.
\end{displaymath} 
We conclude this section with an alternative characterization of CF-repre- \\sentations.

\begin{corollary}
Let $T \in L(X)^n$ be a weak dual Cowen-Douglas tuple of rank $N$ on a domain $\Omega \subseteq \C^n$ and let $\rho: X \rightarrow \mathcal{O}(\Omega_0, D)$ be a $\C[z]$-module homomorphism on a domain $\emptyset \neq \Omega_0 \subseteq \Omega$ with a finite-dimensional vector space $D$ such that 
\begin{displaymath}
\ker \rho = \bigcap_{z \in \Omega} (z - T) X^n.
\end{displaymath}
Then $\rho$ is a CF-representation of $T$ if and only if ${\rm fd}(\rho X) = N$.
\end{corollary}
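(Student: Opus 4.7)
My plan is to reduce condition (ii) of the CF-representation definition to the single numerical requirement ${\rm fd}(\rho X) = N$. The first step is purely module-theoretic: since $\rho : X \to \hat{X} = \rho X$ is a surjective $\C[z]$-module homomorphism, one has
\[
\sum_{i=1}^n (\lambda_i - M_{z_i}) \hat{X} = \rho\left( \sum_{i=1}^n (\lambda_i - T_i) X \right)
\]
for every $\lambda \in \Omega_0$, and therefore $\rho$ induces a vector-space isomorphism
\[
\hat{X} \Big/ \sum_{i=1}^n (\lambda_i - M_{z_i}) \hat{X} \;\cong\; X \Big/ \left( \sum_{i=1}^n (\lambda_i - T_i) X + \ker \rho \right).
\]

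The second step invokes the hypothesis $\ker \rho = \bigcap_{z \in \Omega} (z-T) X^n$ together with the inclusion $\Omega_0 \subseteq \Omega$: for each $\lambda \in \Omega_0$ one has $\ker \rho \subseteq \sum_{i=1}^n (\lambda_i - T_i) X$, so the sum in the denominator collapses. Since $T$ is a weak dual Cowen-Douglas tuple of rank $N$ on $\Omega$, this yields
\[
\dim \hat{X} \Big/ \sum_{i=1}^n (\lambda_i - M_{z_i}) \hat{X} = \dim X \Big/ \sum_{i=1}^n (\lambda_i - T_i) X = N
\]
for every $\lambda \in \Omega_0$.

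In view of the previous identity, condition (ii) in the definition of a CF-representation, namely that ${\rm fd}(\hat{X})$ agree with $\dim \hat{X} / \sum_{i=1}^n (\lambda_i - M_{z_i}) \hat{X}$ for all $\lambda \in \Omega_0$, becomes equivalent under the standing hypothesis on $\ker \rho$ to the single equation ${\rm fd}(\hat{X}) = N$. This is precisely the stated equivalence. The argument is bookkeeping, and I do not anticipate any substantive obstacle; the only point to keep straight is the correspondence between the submodules $\sum_{i=1}^n (\lambda_i - T_i) X$ of $X$ and $\sum_{i=1}^n (\lambda_i - M_{z_i}) \hat{X}$ of $\hat{X}$ under $\rho$, which is automatic from $\rho$ being a $\C[z]$-module map.
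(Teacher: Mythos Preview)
Your argument is correct and in fact cleaner than the one the paper gives. You observe that the hypothesis $\ker\rho = \bigcap_{z\in\Omega}(z-T)X^n \subseteq (\lambda-T)X^n$ for every $\lambda\in\Omega_0$ makes the induced map
\[
X\big/(\lambda-T)X^n \longrightarrow \hat X\big/(\lambda-M_z)\hat X^n,\qquad [x]\mapsto[\rho x],
\]
a vector-space \emph{isomorphism}, so that $\dim\hat X/(\lambda-M_z)\hat X^n = N$ holds identically on $\Omega_0$ and condition~(ii) collapses to ${\rm fd}(\hat X)=N$.

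The paper, by contrast, only records the \emph{surjectivity} of this map, obtaining $\dim\hat X/(\lambda-M_z)\hat X^n \le N$. It then uses the second surjection $\hat X/(\lambda-M_z)\hat X^n \twoheadrightarrow \hat X_\lambda$ together with the assumption ${\rm fd}(\hat X)=N$ to force equality off a proper analytic set, and finally invokes Kaballo's semi-Fredholm result (after equipping $\hat X$ with the quotient Banach-space norm) to conclude that the cokernel dimension is constant, hence equal to $N$ everywhere. Your route bypasses both the analytic-set argument and the appeal to Kaballo; the price is nil, since the injectivity you use is immediate from the kernel hypothesis. The paper's approach would still work if one only knew $\ker\rho \supseteq \bigcap_z (z-T)X^n$, but under the stated two-sided hypothesis your shortcut is preferable.
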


\begin{proof}
Suppose that $\text{fd}(\rho X ) = N$. Define $\hat{X} = \rho(X)$. Since the maps 
\begin{displaymath}
X / (\lambda - T) X^n \rightarrow \hat{X} / (\lambda - M_z) \hat{X}^n,\; [x] \mapsto [\rho x]
\end{displaymath}
and
\begin{displaymath}
\hat{X} / (\lambda - M_z) \hat{X}^n \rightarrow \hat{X}_{\lambda},\; [f] \mapsto f(\lambda) 
\end{displaymath}
are surjective for each $\lambda \in \Omega_0$, it follows that 
\begin{displaymath}
\dim \hat{X} / (\lambda - M_z) \hat{X}^n \leq N
\end{displaymath}
for all $\lambda \in \Omega_0$ and that equality holds on $\Omega_0 \backslash A$ with a suitable proper analytic subset $A \subseteq \Omega_0$. Equipped with the norm $\Vert \rho(x) \Vert = \Vert x + \ker \rho \Vert$, the space $\hat{X}$ is a Banach space and $M_z \in L(\hat{X})^n$ is a commuting tuple of bounded operators on $\hat{X}$. A result of Kaballo (Satz 1.5 in \cite{K}) shows that the set 
\begin{displaymath}
\{ \lambda \in \Omega_0; \dim \hat{X} / (\lambda - M_z) \hat{X}^n  > \min_{\mu \in \Omega_0} \dim \hat{X} / (\mu - M_z) \hat{X}^n \}
\end{displaymath}
is a proper analytic subset of $\Omega_0$. Combining these results we find that
\begin{displaymath}
\dim \hat{X} / (\lambda - M_z) \hat{X}^n  = N
\end{displaymath}
for all $\lambda \in \Omega_0$. Hence $\rho$ is a CF-representation of $T$.

Conversely, if $\rho$ is a CF-representation of $T$, then $\text{fd}(\rho X) = N$ by the remarks preceding the corollary. 
\end{proof}

\section{A limit formula for the fiber dimension}

Let $\Omega \subseteq \C^n$ be a domain with $0 \in \Omega$ and let $D$ be a finite-dimensional complex vector space. 
For $k \in \N$, let us consider the mapping $T_k : \mathcal{O}(\Omega, D) \rightarrow \mathcal{O}(\Omega, D)$ which 
associates with each function $f \in \mathcal{O}(\Omega, D)$ its $k$-th Taylor polynomial, that is,

\begin{displaymath}
T_k(f)(z) = \sum_{\vert \alpha \vert \leq k} \frac{f^{(\alpha)}(0)}{\alpha !} z^{\alpha}.
\end{displaymath}

In \cite{HSFT} (Lemma 1.4) it was shown that, for a given $\mathbb C[z]$-submodule,  there is a proper analytic subset $A \subseteq \Omega$ such that

\begin{displaymath}
\dim M_z = \max_{w \in \Omega} \dim M_w = n! \lim_{k \rightarrow \infty} \frac{\dim T_k(M)}{k^n}
\end{displaymath}

holds for all $z \in \Omega \backslash A$. \\
Based on this observation, we will deduce a similar limit formula for the fiber dimension of invariant subspaces of weak Cowen-Douglas tuples on $\Omega$.

Given a commuting tuple $T \in L(X)^n$ of bounded operators on a Banach space $X$, we write
\begin{align*}
K^{\bullet}(T, X): \xymatrix{0 \ar[r] & \Lambda^0(X) \ar[r]^{\delta_T^0} & \Lambda^1(X) \ar[r]^{\delta_T^1} & ... \ar[r]^{\delta_T^{n-1}} & \Lambda^n(X) \ar[r] & 0}
\end{align*}
for the Koszul complex of T (cf. Section 2.2 in \cite{EP}). For $i = 0,...,n$, let
\begin{displaymath}
H^i(T,X) = \ker(\delta_T^i) / {\rm Im}(\delta_T^{i-1})
\end{displaymath}
be the $i$-th cohomology group of $K^{\bullet}(T, X)$. There is a canonical isomorphism $H^n(T,X) \cong X / \sum_{i=1}^n T_iX$ of complex vector spaces.

In the following, given a commuting operator tuple $T \in L(X)^n$ and an invariant subspace $Y \in \text{Lat}(T)$, we denote by 
\begin{displaymath}
R = T\vert_Y \in L(Y)^n, S = T / Y \in L(Z)^n
\end{displaymath}
the restriction of $T$ to $Y$ and the quotient of $T$ modulo $Y$ on $Z = X / Y$. 
The inclusion $i: X \rightarrow Y$ and the quotient map $q: X \rightarrow Z$ induce a short exact sequence of complexes
\begin{align*}
\xymatrix{ 0 \ar[r] & K^{\bullet}(z - R, Y) \ar[r]^i & K^{\bullet}(z - T, X) \ar[r]^q & K^{\bullet}(z - S, Z) \ar[r] & 0}.
\end{align*}
It is a standard fact from homological algebra that there are connecting homomorphisms $d_z^i : H^i(z - S, Z) \rightarrow H^{i+1}(z - R, Y)$ ($i = 0,..., n-1$) such that the induced sequence of cohomology spaces

\begin{align*}
\entrymodifiers={+!!<0pt,\fontdimen22\textfont2>}
\xymatrix{ 0 \ar[r] & H^0(z - R, Y) \ar[r]^i & H^0(z - T, X) \ar[r]^q & H^0(z - S, Z) \\
			 \ar[r]^-{d_z^0} & H^1(z - R, Y) \ar[r]^i & H^1(z - T, X) \ar[r]^q & H^1(z - S, Z) \\
			 \ar[r]^-{d_z^1} & H^2(z - R, Y) \ar[r] & \quad \quad ... \quad \quad \\
			 \ar[r]^-{d_z^{n-1}} & H^n(z - R, Y) \ar[r]^i & H^n(z - T, X) \ar[r]^q & H^n(z - S, Z) \ar[r] & 0}
\end{align*}

is exact again. In particular, we obtain
\begin{align*}
\text{Im}(d_z^{n-1}) &= \ker (H^n(z - R, Y) \xrightarrow{i} H^n(z - T, X)) \\
&= (Y \cap (z-T)X^n) / (z - R)Y^n.
\end{align*}

\begin{lemma}

Let $T \in L(X)^n$ be a weak dual Cowen-Douglas tuple of rank $N$ on a domain $\Omega \subseteq \C^n$ and let $Y \in {\rm Lat}(T)$ be a closed invariant subspace of $T$. Then there is a proper analytic subset $A \subseteq \Omega$ such that 
\begin{displaymath}
\dim H^n (\lambda - S, Z) = N - {\rm fd}(Y)
\end{displaymath}
for all $\lambda \in \Omega \backslash A$.

\end{lemma}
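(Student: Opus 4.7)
The plan is to compute $\dim H^n(\lambda - S, Z)$ from the terminal segment of the long exact cohomology sequence, identify the resulting codimension with $\text{fd}(Y)$ via a local CF-representation, and finally globalise through Kaballo's semi-continuity theorem. For the cohomological reduction, the weak dual Cowen-Douglas hypothesis and the isomorphism $H^n(\lambda - T, X) \cong X/(\lambda - T)X^n$ give $\dim H^n(\lambda - T, X) = N$ for every $\lambda \in \Omega$. Exactness of
\begin{displaymath}
H^n(\lambda - R, Y) \xrightarrow{i} H^n(\lambda - T, X) \xrightarrow{q} H^n(\lambda - S, Z) \to 0
\end{displaymath}
combined with the fact that $i$ is induced by the inclusion $Y \subseteq X$ then yields
\begin{displaymath}
\dim H^n(\lambda - S, Z) = N - \dim Y/(Y \cap (\lambda - T)X^n),
\end{displaymath}
so it suffices to show that $\dim Y/(Y \cap (\lambda - T)X^n) = \text{fd}(Y)$ outside a proper analytic subset of $\Omega$.

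For the local identification I would fix $\lambda_0 \in \Omega$ and, via Theorem \ref{CFexists}, take a continuous CF-representation $\rho: X \to \mathcal{O}(\Omega_0, D)$ on a connected open neighbourhood $\Omega_0 \subseteq \Omega$ of $\lambda_0$. A glance at the construction in the proof of Theorem \ref{CFexists} shows that for $\lambda \in \Omega_0$ the relation $\rho(x)(\lambda) = 0$ holds precisely when $x \in (\lambda - T)X^n$. Restricting $\epsilon_\lambda \circ \rho$ to $Y$ therefore gives a surjection onto $\rho(Y)_\lambda$ with kernel $Y \cap (\lambda - T)X^n$, so $\dim Y/(Y \cap (\lambda - T)X^n) = \dim \rho(Y)_\lambda$. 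Applying Lemma 1.4 of \cite{HSFT} cited at the start of Section 3 to the submodule $\rho(Y) \subseteq \mathcal{O}(\Omega_0, D)$ then furnishes a proper analytic subset $A_0 \subseteq \Omega_0$ with $\dim \rho(Y)_\lambda = \text{fd}(\rho(Y)) = \text{fd}(Y)$ on $\Omega_0 \setminus A_0$, establishing the required equality locally.

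To globalise I would invoke Kaballo's semi-continuity theorem (Satz 1.5 in \cite{K}, already used elsewhere in the paper) applied to the commuting tuple $S \in L(Z)^n$ on the Banach space $Z$. It produces a proper analytic subset $A \subseteq \Omega$ on whose complement the function $\lambda \mapsto \dim Z/(\lambda - S)Z^n = \dim H^n(\lambda - S, Z)$ attains its global minimum $m$. Since $\Omega_0 \setminus (A_0 \cup A)$ is non-empty, the two local descriptions must agree at any point of this set, forcing $m = N - \text{fd}(Y)$, whence $\dim H^n(\lambda - S, Z) = N - \text{fd}(Y)$ on $\Omega \setminus A$. I expect the main obstacle to be the careful identification $\ker(\epsilon_\lambda \circ \rho|_Y) = Y \cap (\lambda - T)X^n$, which relies on extracting the precise property $\ker(\epsilon_\lambda \circ \rho) = (\lambda - T)X^n$ from the construction of $\rho$, together with the passage from the local fibre-dimension statement on $\Omega_0$ to a uniform statement on all of $\Omega$ via Kaballo's result.
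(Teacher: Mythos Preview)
Your argument is correct and follows essentially the same route as the paper. Both proofs pick a CF-representation $\rho$ as in Theorem~\ref{CFexists}, use the tail of the long exact cohomology sequence to express $\dim H^n(\lambda-S,Z)$ as $N$ minus the dimension of an appropriate fibre, and then invoke the analyticity of the non-maximal set. The only cosmetic difference is that the paper phrases the key identification via the map $\delta_\lambda\colon H^n(\lambda-R,Y)\to \hat Y_\lambda$, $[y]\mapsto \rho(y)(\lambda)$, and shows $\ker\delta_\lambda=\operatorname{Im} d_\lambda^{n-1}=(Y\cap(\lambda-T)X^n)/(\lambda-R)Y^n$ using divisibility of $\hat X$, whereas you compute the image of $i$ in $H^n(\lambda-T,X)$ directly as $Y/(Y\cap(\lambda-T)X^n)$ and identify it with $\hat Y_\lambda$ through $\ker(\epsilon_\lambda\circ\rho)=(\lambda-T)X^n$; these are two ways of reading the same isomorphism. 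Your explicit globalisation via Kaballo is in fact cleaner than the paper's terse ``Hence the assertion follows'', which establishes the formula only on the local patch $\Omega_0$ and tacitly defers the passage to all of $\Omega$ to the remark following the lemma.
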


\begin{proof}
Choose a CF-representation $\rho: X \rightarrow \mathcal{O}(\Omega_0, D)$ of $T$ on some domain $\Omega_0 \subseteq \Omega$ as in Theorem \ref{CFexists}. Let $Y \in \text{Lat}(T)$ be arbitrary. Define $\hat{X} = \rho(X)$ and $\hat{Y} = \rho(Y)$. Since the compositions 
\begin{displaymath}
Y^n \xrightarrow{\lambda - R} Y \xrightarrow{\rho} \mathcal{O}(\Omega_0, D) \xrightarrow{\epsilon_{\lambda}} D \quad (\lambda \in D)
\end{displaymath}
are zero, we obtain well-defined surjective linear maps
\begin{displaymath}
\delta_{\lambda}: H^n(\lambda - R, Y) \rightarrow \hat{Y}_{\lambda},\;  [y] \mapsto \rho(y)(\lambda). 
\end{displaymath}
Obviously, for each $\lambda \in \Omega$, the inclusion 
\begin{displaymath}
\text{Im} d_{\lambda}^{n-1} = (Y \cap (\lambda - T)X^n) / (\lambda - R)Y^n \subseteq  \ker \delta_{\lambda}
\end{displaymath}
holds. To see that also the reverse inclusion holds, fix an element $y \in Y$ with $\rho(y)(\lambda) = 0$. Since $\hat{X}$ is a divisible holomorphic model space, there are vectors $x_1,...,x_n  \in X$ with
\begin{displaymath}
\rho(y) = \sum_{i=1}^n (\lambda_i - M_{z_i}) \rho(x_i) = \rho(\sum_{i=1}^n (\lambda_i - T_i) x_i).
\end{displaymath}
But then 
\begin{displaymath}
y - \sum_{i=1}^n (\lambda_i - T_i) x_i \in \bigcap_{z \in \Omega} (z-T)X^n
\end{displaymath}
and hence $y \in Y \cap (\lambda - T)X^n$. Thus, for each $\lambda \in \Omega$, we obtain an exact sequence 
\begin{displaymath}
H^{n-1}(\lambda - S, Z) \xrightarrow{d_{\lambda}^{n-1}} H^n(\lambda - R, Y) \xrightarrow{\delta_{\lambda}} \hat{Y}_{\lambda} \rightarrow 0.
\end{displaymath}
Using the exactness of these sequences and of the long exact cohomology sequences explained in the section leading to Lemma 2.1, we find that
\begin{align*}
& \dim H^n(\lambda - S, Z) \\ &= \dim H^n (\lambda - T, X) - \dim H^n (\lambda - R, Y) / d_{\lambda}^{n-1} H^{n-1}(\lambda - S, Z)\\
&= N - \dim \hat{Y}_{\lambda}
\end{align*}
for all $\lambda \in \Omega$. Hence the assertion follows.
\end{proof}

By the cited result of Kaballo (Satz 1.5 in \cite{K}), in the setting of Lemma 2.1, the set
\begin{displaymath}
\{ \lambda \in \Omega; \dim H^n (\lambda - S, Z) > \min_{\mu \in \Omega} \dim H^n(\mu - S, Z) \}
\end{displaymath}
is an analytic subset of $\Omega$. It is well known that the minimum occurring here can be interpreted as a suitable Samuel multiplicity of the tuples $S - \mu$ for $\mu \in \Omega$.
Let us recall the necessary details.

For simplicity, we only consider the case where $\Omega$ is a domain in $\C^n$ with $0 \in \C^n$. For an arbitrary tuple $T \in L(X)^n$ of bounded operators on a Banach space $X$ with
\begin{displaymath}
\dim H^n (T,X) < \infty,
\end{displaymath}
all the spaces $M_k(T) = \sum_{\vert \alpha \vert = k} T^{\alpha} X$ ($k \in \N$) are finite codimensional in $X$ and the limit
\begin{displaymath}
c(T) = n! \lim_{k \rightarrow \infty} \frac{\dim X / M_k(T)}{k^n}
\end{displaymath}
exists. This number is referred to as the Samuel multiplicity of $T$. For each domain $\Omega \subseteq \C^n$ with $0 \in \Omega$ and $\dim H^n(\lambda - T, X) < \infty$ for all $\lambda \in \Omega$, there is a proper analytic subset $A \subseteq \Omega$ such that
\begin{displaymath}
c(T) = \dim H^n (\lambda - T, X) < \dim H^n (\mu - T, X)
\end{displaymath}
for all $\lambda \in \Omega \backslash A$ and $\mu \in A$ (see Corollary 3.6 in \cite{SMCO}). 
In particular, if $S \in L(Z)^n$ is as in Lemma 2.1 and $0 \in \Omega$, then the formula 
\begin{displaymath}
c(S) = N - \text{fd}(Y)
\end{displaymath}
holds. Hence
the following result from \cite{HSFT} allows us to deduce the announced limit formula for the fiber dimension.

\begin{lemma}
\label{HSFT} {\rm (Lemma 1.6 in \cite{HSFT})}
Let $T \in L(X)^n$ be a commuting tuple of bounded operators on a Banach space $X$, let $Y \in {\rm Lat}(T)$ be a closed invariant subspace and let $S = T/Y \in L(Z)^n$ be the induced quotient tuple on $Z = X/Y$. Suppose that 
\begin{displaymath}
\dim H^n(T,X) < \infty.
\end{displaymath}
Then the Samuel multiplicities of $T$ and $S$ satisfy the relation
\begin{displaymath}
c(S) = c(T) - n! \lim_{k \rightarrow \infty} \frac{\dim (Y + M_k(T))/M_k(T)}{k^n}.
\end{displaymath}
\end{lemma}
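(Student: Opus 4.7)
The plan is to reduce the computation of the Samuel multiplicity $c(S)$ of the quotient tuple to the Samuel-type quantities for $T$ on $X$ by repeated use of isomorphism theorems, and then to take limits.

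First I would verify that all three limits under discussion make sense. The hypothesis $\dim H^n(T,X)<\infty$ is stated in the paragraph preceding the lemma to imply that each $M_k(T)$ is finite-codimensional in $X$, so $c(T)$ exists. Since the short exact sequence of Koszul complexes produces a surjection $H^n(z-T,X)\twoheadrightarrow H^n(z-S,Z)$ at $z=0$, namely $X/\sum_i T_iX\twoheadrightarrow Z/\sum_i S_iZ$, also $\dim H^n(S,Z)<\infty$, so $c(S)$ exists. The quantity $\dim(Y+M_k(T))/M_k(T)$ is bounded by $\dim X/M_k(T)$ and therefore finite, so the limit on the right-hand side, once shown to exist, will be a genuine real number.

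Next I would establish the key algebraic identity
\begin{displaymath}
M_k(S) = \frac{M_k(T)+Y}{Y}.
\end{displaymath}
This is immediate from the definition $S_i(x+Y)=T_ix+Y$: each $S^\alpha Z$ equals $(T^\alpha X + Y)/Y$, and summing over $|\alpha|=k$ gives the claim. Applying the third isomorphism theorem then yields
\begin{displaymath}
\frac{Z}{M_k(S)} \;\cong\; \frac{X/Y}{(M_k(T)+Y)/Y} \;\cong\; \frac{X}{Y+M_k(T)}.
\end{displaymath}
Combining this with the standard finite-dimensional formula
\begin{displaymath}
\dim \frac{X}{Y+M_k(T)} \;=\; \dim \frac{X}{M_k(T)} \;-\; \dim \frac{Y+M_k(T)}{M_k(T)}
\end{displaymath}
(all spaces here being finite-dimensional by the first step) gives
\begin{displaymath}
\dim \frac{Z}{M_k(S)} \;=\; \dim \frac{X}{M_k(T)} \;-\; \dim \frac{Y+M_k(T)}{M_k(T)}.
\end{displaymath}

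Finally I would divide by $k^n/n!$ and pass to the limit $k\to\infty$. The left-hand side converges to $c(S)$ and the first term on the right converges to $c(T)$; both limits exist by the first step. Consequently the remaining term
\begin{displaymath}
n!\lim_{k\to\infty}\frac{\dim(Y+M_k(T))/M_k(T)}{k^n}
\end{displaymath}
exists and equals $c(T)-c(S)$, which rearranges to the formula in the statement. The main obstacle is really only the verification of the identity $M_k(S)=(M_k(T)+Y)/Y$ together with the careful bookkeeping that ensures all codimensions involved are finite; the rest is passing to the limit in a standard dimension formula.
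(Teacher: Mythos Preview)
Your argument is correct. The paper itself does not prove this lemma: it is quoted verbatim as Lemma~1.6 of \cite{HSFT} and used as a black box to derive Corollary~\ref{limitformula}. Your proof is exactly the elementary argument one expects here---the identity $M_k(S)=(M_k(T)+Y)/Y$ followed by the third isomorphism theorem and the additivity of codimension along the chain $M_k(T)\subseteq Y+M_k(T)\subseteq X$---and all steps are justified. One minor remark: in step~5 you should perhaps phrase the finite-dimensionality more carefully as ``$X/M_k(T)$ is finite-dimensional and contains $(Y+M_k(T))/M_k(T)$ as a subspace with quotient $X/(Y+M_k(T))$'', since the individual spaces $X$, $Y+M_k(T)$, $M_k(T)$ are of course not finite-dimensional; but this is clearly what you mean.
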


As a direct application we obtain a corresponding formula for the fiber dimension.

\begin{corollary}
\label{limitformula}
Let $T \in L(X)^n$ be a weak dual Cowen-Douglas tuple of rank $N$ on a domain $\Omega \subseteq \C^n$ with $0 \in \Omega$, and let $Y \in {\rm Lat}(T)$ be a closed invariant subspace for $T$. Then the formula
\begin{displaymath}
{\rm fd}(Y) = n! \lim_{k \rightarrow \infty} \frac{\dim (Y + M_k(T))/M_k(T)}{k^n}
\end{displaymath}
holds.
\end{corollary}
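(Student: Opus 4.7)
The plan is to read the result directly off of Lemma \ref{HSFT} after identifying the two Samuel multiplicities $c(T)$ and $c(S)$ in terms of the data at hand.

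First I would check the hypothesis of Lemma \ref{HSFT}: since $T$ is a weak dual Cowen-Douglas tuple of rank $N$ on $\Omega$, we have
\[
\dim H^n(T,X) = \dim X / \textstyle\sum_{i=1}^n T_i X = N < \infty
\]
because $0 \in \Omega$. This also shows that $\dim H^n(\lambda - T, X) = N$ for every $\lambda \in \Omega$, so by the Kaballo--Samuel multiplicity characterization cited from \cite{SMCO} (Corollary 3.6) we may identify the Samuel multiplicity as $c(T) = N$.

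Next I would compute $c(S)$ for the quotient tuple $S = T/Y$. By Lemma 2.1 there is a proper analytic set $A \subseteq \Omega$ on which $\dim H^n(\lambda - S, Z) = N - \mathrm{fd}(Y)$ for $\lambda \in \Omega \setminus A$, while on $A$ the dimension jumps upward by the already-cited result of Kaballo. Again appealing to Corollary 3.6 of \cite{SMCO}, the generic (= minimum) value of $\dim H^n(\lambda - S, Z)$ equals $c(S)$, so $c(S) = N - \mathrm{fd}(Y)$.

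With both Samuel multiplicities in hand, Lemma \ref{HSFT} gives
\[
N - \mathrm{fd}(Y) \;=\; c(S) \;=\; c(T) - n!\lim_{k \to \infty} \frac{\dim(Y + M_k(T))/M_k(T)}{k^n} \;=\; N - n!\lim_{k \to \infty} \frac{\dim(Y + M_k(T))/M_k(T)}{k^n},
\]
and cancelling $N$ yields the asserted formula. There is no real obstacle here beyond careful bookkeeping, since the substantive work (the identification of $\mathrm{fd}(Y)$ with $N - \dim H^n(\lambda - S, Z)$ generically, and the Samuel-type limit formula relating $c(T)$ and $c(S)$) has already been carried out in Lemma 2.1 and Lemma \ref{HSFT}; the corollary is merely the arithmetic combination of these two results once one knows $c(T) = N$.
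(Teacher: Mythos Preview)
Your proposal is correct and follows essentially the same route as the paper: identify $c(T)=N$ (from the constant rank hypothesis and Corollary~3.6 in \cite{SMCO}), identify $c(S)=N-\mathrm{fd}(Y)$ (from Lemma~2.1 combined with the same Samuel-multiplicity characterization), and then subtract using Lemma~\ref{HSFT}. The paper's proof is terser but the logical content is identical.
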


\begin{proof}
It suffices to observe that in the setting of Corollary \ref{limitformula} the identity $c(T) = N$ holds and then to compare the formula from Lemma \ref{HSFT} with the formula
\begin{displaymath}
c(S) = N - \text{fd}(Y)
\end{displaymath}
deduced in the section leading to Lemma \ref{HSFT}.
\end{proof}

For weak dual Cowen-Douglas tuples $T \in L(X)^n$ on general domains $\Omega \subseteq \C^n$ (not necessarily containing $0$), the above formula for ${\rm fd}(Y)$ remains true if on the right-hand side the spaces $M_k(T)$ are replaced by the spaces $M_k(T - \lambda_0)$ with $\lambda_0 \in \Omega$ arbitrary. This follows by an elementary translation argument.\\

If in Corollary \ref{limitformula} the space $X$ is a Hilbert space and if we write $P_k$ for the orthogonal projections onto the subspaces $M_k(T)^{\bot}$, then there are canonical vector space isomorphisms
\begin{displaymath}
(Y + M_k(T))/M_k(T) \rightarrow P_k Y,\;  [y] \mapsto P_k Y.
\end{displaymath}
Thus the resulting formula
\begin{displaymath}
\text{fd}(Y) = n! \lim_{k \rightarrow \infty} \frac{\dim(P_k Y)}{k^n}
\end{displaymath}
extends Theorem 19 in \cite{CCF}.

In the final result of this section we show that the fiber dimension ${\rm fd}(Y)$ is invariant under sufficiently small changes of the space $Y$. For given invariant subspaces $Y_1, Y_2 \in {\rm Lat}(T)$ with $Y_1 \subseteq Y_2$, we write $\sigma(T, Y_2 / Y_1)$ for the Taylor spectrum of the quotient tuple induced by $T$ on $Y_2 / Y_1$. 

\begin{corollary}
Let $T \in L(X)^n$ be a weak dual Cowen-Douglas tuple of rank $N$ on a domain $\Omega \subseteq \C^n$. Suppose that $Y_1, Y_2 \in {\rm Lat}(T)$ are closed $T$-invariant subspaces with $Y_1 \subseteq Y_2$ and $\Omega \cap (\C^n \backslash \sigma(T, Y_2 / Y_1)) \neq \emptyset$. Then ${\rm fd}(Y_1) = {\rm fd}(Y_2)$.
\end{corollary}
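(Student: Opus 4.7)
The plan is to combine the cohomological description of the fiber dimension already established in this section (the unnamed lemma before Lemma~\ref{HSFT}, which asserts that $\dim H^n(\lambda - S, Z) = N - {\rm fd}(Y)$ off a proper analytic set) with the long exact Koszul cohomology sequence attached to a short exact sequence of quotient tuples.

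First I set $S_i = T/Y_i \in L(Z_i)^n$ on $Z_i = X/Y_i$ for $i=1,2$. Since $Y_1 \subseteq Y_2$, the subspace $W = Y_2/Y_1$ lies in ${\rm Lat}(S_1)$, the restriction $S_1|_W$ is precisely the tuple induced by $T$ on $Y_2/Y_1$, and the quotient tuple $S_1/W$ is naturally identified with $S_2$ on $Z_1/W \cong Z_2$. Applying the generic cohomological formula recalled above separately to $Y_1$ and $Y_2$ produces proper analytic subsets $A_1, A_2 \subseteq \Omega$ with
\[
\dim H^n(\lambda - S_i, Z_i) = N - {\rm fd}(Y_i) \qquad (\lambda \in \Omega \setminus A_i,\ i=1,2).
\]

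Next I invoke the long exact Koszul cohomology sequence (recalled in the paragraphs preceding Lemma~\ref{HSFT}) associated with the short exact sequence $0 \to W \to Z_1 \to Z_2 \to 0$ under the tuples $S_1|_W$, $S_1$, $S_2$. For any $\lambda \in \C^n \setminus \sigma(T, Y_2/Y_1)$ the Koszul complex of $\lambda - S_1|_W$ on $W$ is exact by the very definition of the Taylor spectrum, so $H^i(\lambda - S_1|_W, W) = 0$ for every $i$. The long exact sequence then collapses to isomorphisms $H^i(\lambda - S_1, Z_1) \cong H^i(\lambda - S_2, Z_2)$ for all $i$.

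Finally, by hypothesis $\Omega \cap (\C^n \setminus \sigma(T, Y_2/Y_1))$ is a non-empty open subset of the domain $\Omega$, while $A_1 \cup A_2$ is a proper analytic subset of $\Omega$; hence its complement in $\Omega$ is dense and one can pick a point $\lambda$ lying simultaneously off $A_1 \cup A_2$ and off the Taylor spectrum. At such a $\lambda$, the isomorphism in degree $n$ combined with the two generic formulas yields $N - {\rm fd}(Y_1) = N - {\rm fd}(Y_2)$, which is the asserted equality. The only non-routine ingredient is the equivalence between $\lambda \notin \sigma(T,Y_2/Y_1)$ and exactness of $K^\bullet(\lambda - S_1|_W, W)$, but this is the definition of the Taylor spectrum, so no genuine obstacle arises; the remainder is diagram chasing and dimension counting.
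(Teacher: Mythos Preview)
Your proof is correct and follows essentially the same route as the paper's: both pick a point $\lambda \in \Omega \setminus \sigma(T, Y_2/Y_1)$ at which the generic formula $\dim H^n(\lambda - T/Y_i, X/Y_i) = N - {\rm fd}(Y_i)$ holds for $i=1,2$, and then invoke the long exact cohomology sequence attached to $0 \to Y_2/Y_1 \to X/Y_1 \to X/Y_2 \to 0$ to conclude. Your write-up is in fact more explicit than the paper's about why such a $\lambda$ can be chosen.
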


\begin{proof}
By Lemma 2.1 there is a point $\lambda \in \Omega \cap (\C^n \backslash \sigma(T, Y_1/Y_2))$ with 
\begin{displaymath}
\dim H^n (\lambda - T/Y_i, X/Y_i) = N - {\rm fd}(Y_i)
\end{displaymath}
for $i=1,2$. Using the long exact cohomology sequences induced by the canonical exact sequence
\begin{displaymath}
0 \rightarrow Y_2 / Y_1 \rightarrow Y / Y_1 \rightarrow Y / Y_2 \rightarrow 0
\end{displaymath}
one finds that the $n$-th cohomology spaces of $\lambda - T/Y_1$ and $\lambda - T/Y_2$ are isomorphic. Hence we obtain that ${\rm fd}(Y_1) = {\rm fd}(Y_2)$.
\end{proof}

To make the above proof work, it suffices that there is a point in $ \Omega$ which is not contained in the right spectrum of the quotient tuple induced by $T$ on $Y_2 / Y_1$ (cf. Section 2.6 in \cite{EP}). The hypotheses of Corollory 2.4 are satisfied for instance if $\dim (Y_2 / Y_1) < \infty$. Thus Corollary 2.4 can be seen as an extension of Proposition 2.5 in \cite{CGW}.

\section{Analytic Samuel multiplicity}
We briefly indicate an alternative way to calculate fiber dimensions which extends a corresponding idea from \cite{CCF}. Let $T \in L(X)^n$ be a commuting tuple of bounded operators on a Banach space $X$ and let $\Omega \subseteq \C^n$ be a domain such that
\begin{displaymath}
\dim H^n(\lambda - T, X) < \infty
\end{displaymath}
for all $\lambda \in \Omega$. For simplicity, we again assume that $0 \in \Omega$. By Corollary 2.2 in \cite{SMCO} the quotient sheaf 
\begin{displaymath}
\mathcal{H}_T = \mathcal{O}_{\Omega}^X / (z - T) \mathcal{O}_{\Omega}^{X^n}
\end{displaymath}
of the sheaf of all analytic $X$-valued functions on $\Omega$ is a coherent analytic sheaf on $\Omega$. Let $Y \in \text{Lat}(T)$ be a closed invariant subspace for $T$. As before denote by $R = T\vert_Y \in L(Y)^n$ the restriction of $T$ and by $S = T/Y \in L(Z)^n$ the quotient tuple induced by $T$ on $Z = X / Y$. Let $i: Y \rightarrow X$ and $q: X \rightarrow Z$ be the inclusion and quotient map, respectively. Then
\begin{displaymath}
0 \rightarrow K^{\bullet}(z- R, \mathcal{O}_{\Omega}^Y) \xrightarrow{i} K^{\bullet}(z-T, \mathcal{O}_{\Omega}^X) \xrightarrow{q} K^{\bullet}(z - S, \mathcal{O}_{\Omega}^Z) \rightarrow 0
\end{displaymath}
is a short exact sequence of complexes of analytic sheaves on $\Omega$. Passing to stalks and using the induced long exact cohomology sequences, one finds that the upper horizontal in the commutative diagram
\begin{align*}
\xymatrix{&\mathcal{H}_R \ar[r]^i & \mathcal{H}_T \ar[r]^q &\mathcal{H}_S \rightarrow 0 \\
			& \mathcal{O}_{\Omega}^Y \ar@{->>}[u]^{\pi_Y}  \ar[r]^i & \mathcal{O}_{\Omega}^X \ar@{->>}[u]_{\pi_X}}
\end{align*}
is an exact sequence of analytic sheaves. Here $\pi_Y$ and $\pi_X$ denote the canonical quotient maps. The sheaf $\mathcal{M} = \pi_X(i \mathcal{O}_{\Omega}^Y)$ is the kernel of the surjective sheaf homomorphism
\begin{displaymath}
\mathcal{H}_T \xrightarrow{q} \mathcal{H}_S.
\end{displaymath}
Since $\mathcal{H}_T$ and $\mathcal{H}_S$ are coherent, also the sheaf $\mathcal{M}$ is a coherent analytic sheaf on $\Omega$ (Satz 26.13 in \cite{Kultze}). Hence
\begin{displaymath}
0 \rightarrow \mathcal{M}_0 \xrightarrow{i} \mathcal{H}_{T, 0} \xrightarrow{q} \mathcal{H}_{S, 0} \rightarrow 0
\end{displaymath}
is an exact sequence of Noetherian $\mathcal{O}_0$-modules. For a Noetherian $\mathcal{O}_0$-module $E$, let us denote by $e_{\mathcal{O}_0}(E)$ its analytic Samuel multiplicity, that is, the multiplicity of $E$ with respect to the multiplicity system $(z_1,...,z_n)$ on $E$ (see Section 7.4 in \cite{Northcott}). Since the analytic Samuel multiplicity is additive with respect to short exact sequences of Noetherian $\mathcal{O}_0$-modules (Theorem 7.5 in \cite{Northcott}), it follows that 
\begin{displaymath}
e_{\mathcal{O}_0}(\mathcal{H}_{T,0}) = e_{\mathcal{O}_0}(\mathcal{M}_0) + e_{\mathcal{O}_0}(\mathcal{H}_{S,0}).
\end{displaymath}
By Corollary 4.1 in \cite{SMCO} the analytic Samuel multiplicities $e_{\mathcal{O}_0}(\mathcal{H}_{T,0})$ and $e_{\mathcal{O}_0}(\mathcal{H}_{S,0})$ coincide with the Samuel multiplicities $c(T)$ and $c(S)$ as defined in Section 2. Thus we obtain the identity
\begin{displaymath}
c(T) = e_{\mathcal{O}_0}(\mathcal{M}_0) + c(S).
\end{displaymath}
By Theorem 8.5 in \cite{Northcott} the analytic Samuel multiplicity $e_{\mathcal{O}_0}(\mathcal{M}_0)$ can also be calculated as the Euler characteristic $\chi(K^{\bullet}(z, \mathcal{M}_0))$ of the Koszul complex of the multiplication operators with $z_1,...,z_n$ on $\mathcal{M}_0$. \\
Summarizing we obtain the following result.
\begin{theorem}
Let $T \in L(X)^n$ be a weak dual Cowen-Douglas tuple on a domain $\Omega \subseteq \C^n$ with $0 \in \Omega$ and let $Y \in {\rm Lat}(T)$ be a closed invariant subspace for $T$. Then with the notation from above, the fiber dimension of $Y$ can be calculated as 
\begin{displaymath}
{\rm fd}(Y) = n! \lim_{k \rightarrow \infty} \frac{\dim (Y + M_k(T))/M_k(T)}{k^n} = e_{\mathcal{O}_0}(\mathcal{M}_0).
\end{displaymath}
\end{theorem}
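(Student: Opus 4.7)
The plan is that virtually all the work has already been carried out in the discussion preceding the statement, so the proof amounts to assembling the pieces. I will split the double identity into its two halves and justify each separately.

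For the first equality \({\rm fd}(Y) = n!\lim_{k \to \infty}\dim(Y+M_k(T))/M_k(T)/k^n\), I would simply cite Corollary \ref{limitformula}, which is precisely this statement (with $0 \in \Omega$ taking the role of the arbitrary base point). No extra argument is required.

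For the second equality \(n!\lim_{k \to \infty}\dim(Y+M_k(T))/M_k(T)/k^n = e_{\mathcal{O}_0}(\mathcal{M}_0)\), I would feed the short exact sequence of Noetherian $\mathcal{O}_0$-modules
\[
0 \rightarrow \mathcal{M}_0 \xrightarrow{i} \mathcal{H}_{T,0} \xrightarrow{q} \mathcal{H}_{S,0} \rightarrow 0
\]
into the additivity of the analytic Samuel multiplicity (Theorem 7.5 in \cite{Northcott}), yielding
\[
e_{\mathcal{O}_0}(\mathcal{H}_{T,0}) = e_{\mathcal{O}_0}(\mathcal{M}_0) + e_{\mathcal{O}_0}(\mathcal{H}_{S,0}).
\]
Next I would invoke Corollary 4.1 in \cite{SMCO} to identify $e_{\mathcal{O}_0}(\mathcal{H}_{T,0}) = c(T)$ and $e_{\mathcal{O}_0}(\mathcal{H}_{S,0}) = c(S)$. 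Because $T$ is a weak dual Cowen-Douglas tuple of rank $N$, one has $\dim H^n(T,X) = N$, and the defining limit together with an application of Lemma \ref{HSFT} to $Y = \{0\}$ (or the direct observation from the proof of Corollary \ref{limitformula}) gives $c(T) = N$. Similarly, as was observed in the paragraph following Lemma 2.1, the formula $c(S) = N - {\rm fd}(Y)$ holds. Substituting these values yields
\[
e_{\mathcal{O}_0}(\mathcal{M}_0) = c(T) - c(S) = N - (N - {\rm fd}(Y)) = {\rm fd}(Y),
\]
and combining this with Corollary \ref{limitformula} completes the proof.

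There is no serious obstacle here: the heavy lifting (existence of the coherent subsheaf $\mathcal{M}$, identification of sheaf-theoretic and operator-theoretic Samuel multiplicities, and the limit formula for ${\rm fd}(Y)$) is already in place. The only thing to be careful about is the bookkeeping: verifying that $\mathcal{H}_{T,0}$ and $\mathcal{H}_{S,0}$ genuinely satisfy the finite-codimension hypothesis needed for $c(T)$ and $c(S)$ to be defined, which is automatic from the weak dual Cowen-Douglas assumption and the surjectivity of $H^n(\lambda - T, X) \twoheadrightarrow H^n(\lambda - S, Z)$ seen in the proof of Lemma 2.1.
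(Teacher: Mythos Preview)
Your proposal is correct and follows exactly the paper's own approach: the theorem is stated in the paper as a summary of the preceding discussion in Section~4, which establishes $c(T) = e_{\mathcal{O}_0}(\mathcal{M}_0) + c(S)$ via additivity and Corollary~4.1 of \cite{SMCO}, and then combines this with $c(T)=N$, $c(S)=N-{\rm fd}(Y)$, and Corollary~\ref{limitformula} precisely as you outline.
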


\section{A lattice formula for the fiber dimension}
Let $T \in L(X)^n$ be a weak dual Cowen-Douglas tuple of rank $N$ on a domain $\Omega \subseteq \C^n$ and let $Y_1,Y_2 \in \text{Lat}(T)$ be closed invariant subspaces. A natural problem studied in \cite{CCF} is to find conditions under which the dimension formula
\begin{displaymath}
\text{fd}(Y_1) + \text{fd}(Y_2) = \text{fd}(Y_1 \vee Y_2) + \text{fd}(Y_1 \cap Y_2)
\end{displaymath}
holds. Note that, for a dual Cowen-Douglas tuple of rank $1$, the validity of this formula for all closed invariant subspaces $Y_1, Y_2$ is equivalent to the condition that any two non-zero closed invariant subspaces $Y_1,Y_2$ have a non-trivial intersection. As observed in \cite{CCF} elementary linear algebra can be used to obtain at least an inequality.

\begin{lemma}
\label{latticegreater}
Let $T \in L(X)^n$ be a weak dual Cowen-Douglas tuple on a domain $\Omega \subseteq \C^n$ and let $Y_1,Y_2 \subseteq X$ be linear $T$-invariant subspaces. Then the inequality 
\begin{displaymath}
{\rm fd}(Y_1) + {\rm fd}(Y_2) \geq {\rm fd}(Y_1 + Y_2) + {\rm fd}(Y_1 \cap Y_2)
\end{displaymath}
holds.
\end{lemma}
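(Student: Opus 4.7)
The plan is to reduce the inequality to a single point-evaluation at a cleverly chosen point in the CF-representation, where ordinary linear algebra in the finite-dimensional vector space $D$ takes over.

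First, I would fix a CF-representation $\rho : X \to \mathcal{O}(\Omega_0, D)$ of $T$ on some subdomain $\Omega_0 \subseteq \Omega$ (Theorem \ref{CFexists}), and set $\hat{X} = \rho(X)$, $\hat{Y}_i = \rho(Y_i)$. Linearity of $\rho$ gives the identity $\rho(Y_1 + Y_2) = \hat{Y}_1 + \hat{Y}_2$, and trivially $\rho(Y_1 \cap Y_2) \subseteq \hat{Y}_1 \cap \hat{Y}_2$. By the definition of the fiber dimension, the inequality to be proved is equivalent to
\begin{displaymath}
{\rm fd}(\hat{Y}_1) + {\rm fd}(\hat{Y}_2) \geq {\rm fd}(\hat{Y}_1 + \hat{Y}_2) + {\rm fd}(\rho(Y_1 \cap Y_2)).
\end{displaymath}

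Next, I would apply Lemma 1.4 of \cite{HSFT} (used already in the proof of Lemma \ref{kernelpointev}) to each of the four $\C[z]$-submodules $\hat{Y}_1, \hat{Y}_2, \hat{Y}_1 + \hat{Y}_2$, $\rho(Y_1 \cap Y_2)$ of $\mathcal{O}(\Omega_0, D)$. Each such submodule has the property that its set of non-maximal points is contained in a proper analytic subset of $\Omega_0$. Since $\Omega_0$ is a domain and the union of finitely many proper analytic subsets of $\Omega_0$ is again a proper analytic subset, there exists a point $\lambda \in \Omega_0$ which is simultaneously a maximal point for all four submodules.

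At such a $\lambda$, the four fiber dimensions equal the dimensions of the point evaluations, and the elementary dimension formula for finite-dimensional subspaces of $D$ yields
\begin{displaymath}
\dim(\hat{Y}_1)_\lambda + \dim(\hat{Y}_2)_\lambda = \dim\bigl((\hat{Y}_1)_\lambda + (\hat{Y}_2)_\lambda\bigr) + \dim\bigl((\hat{Y}_1)_\lambda \cap (\hat{Y}_2)_\lambda\bigr).
\end{displaymath}
Since $\bigl((\hat{Y}_1)_\lambda + (\hat{Y}_2)_\lambda\bigr) = (\hat{Y}_1 + \hat{Y}_2)_\lambda$ and $\rho(Y_1 \cap Y_2)_\lambda \subseteq (\hat{Y}_1)_\lambda \cap (\hat{Y}_2)_\lambda$, the right-hand side is bounded below by ${\rm fd}(\hat{Y}_1 + \hat{Y}_2) + {\rm fd}(\rho(Y_1 \cap Y_2))$, which is the desired inequality.

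There is no serious obstacle here; the only point that requires a moment's thought is the existence of a common maximal point, which follows from the fact that the set of maximal points of each submodule is generic. The asymmetric treatment of sum and intersection on the right-hand side (equality in one case, inclusion in the other) is precisely what causes the result to be an inequality rather than an equality, and motivates the subsequent search for conditions under which equality holds.
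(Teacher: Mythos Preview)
Your proposal is correct and follows essentially the same route as the paper's proof: pick a CF-representation, choose a point $\lambda \in \Omega_0$ that is simultaneously maximal for all four submodules $\rho(Y_1)$, $\rho(Y_2)$, $\rho(Y_1+Y_2)$, $\rho(Y_1\cap Y_2)$, and then apply the dimension formula for subspaces of the finite-dimensional space $D$ together with the inclusion $\epsilon_\lambda\rho(Y_1\cap Y_2)\subseteq \epsilon_\lambda\rho(Y_1)\cap\epsilon_\lambda\rho(Y_2)$. Your explicit justification of the existence of a common maximal point via the genericity statement from \cite{HSFT} is a bit more detailed than the paper's version, but the argument is identical.
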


\begin{proof}
Let $\rho: X \rightarrow \mathcal{O}(\Omega_0, D)$ be a CF-representation of $T$ on a domain $\Omega_0 \subseteq \Omega$. It suffices to observe that, for each point $\lambda \in \Omega_0$, the estimate
\begin{align*}
\dim \epsilon_{\lambda} \rho(Y_1 + Y_1) &= \dim \epsilon_{\lambda} \rho(Y_1) + \dim \epsilon_{\lambda} \rho(Y_2) - \dim (\epsilon_{\lambda} \rho(Y_1) \cap \epsilon_{\lambda} \rho(Y_2)) \\
&\leq \dim \epsilon_{\lambda} \rho(Y_1) + \dim \epsilon_{\lambda} \rho(Y_2) - \dim \epsilon_{\lambda} \rho(Y_1 \cap Y_2)
\end{align*}
holds and then to choose $\lambda$ as a common maximal point for the submodules $ \rho(Y_1 + Y_2)$, $\rho(Y_1)$, $\rho(Y_2)$ and $ \rho(Y_1 \cap Y_2)$. 
\end{proof}

Note that, for closed invariant subspaces $Y_1, Y_2 \in \text{Lat}(T)$, the inequality in \ref{latticegreater} can be rewritten as
\begin{displaymath}
\text{fd}(Y_1) + \text{fd}(Y_2) \geq \text{fd}(Y_1 \vee Y_2) + \text{fd}(Y_1 \cap Y_2).
\end{displaymath}

Let $\Omega \subseteq \C^n$ be a domain and let $D$ be an $N$-dimensional complex vector space. We shall say that a function $f \in \mathcal{O}(\Omega, D)$ has coefficients in a given subalgebra $A \subseteq \mathcal{O}(\Omega)$ if the coordinate functions of $f$ with respect to some, or equivalently, every basis of $D$ belong to $A$. Let $M \subseteq \mathcal{O}(\Omega, D)$ be a $\C[z]$-submodule. We shall say that $A$ is dense in $M$ if every function $f \in M$ is the pointwise limit of a sequence $(f_k)_{k \in \N}$ of functions in $M$ such that each $f_k$ has coordinate functions in $A$.

\begin{theorem} 
\label{generalequality}
Let $M_1, M_2 \subseteq \mathcal{O}(\Omega, D)$ be $\C[z]$-submodules such that $A$ is dense in $M_1$ and in $M_2$ and such that $AM_i \subseteq M_i$ for $i=1,2$. Then we have
\begin{displaymath}
{\rm fd}(M_1 + M_2) + {\rm fd}(M_1 \cap M_2) = {\rm fd}(M_1) + {\rm fd}(M_2).
\end{displaymath}
\end{theorem}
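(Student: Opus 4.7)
The starting point is Lemma~\ref{latticegreater}, which yields ${\rm fd}(M_1 + M_2) + {\rm fd}(M_1 \cap M_2) \leq {\rm fd}(M_1) + {\rm fd}(M_2)$; only the reverse inequality has to be established. I would pick a point $\lambda \in \Omega$ which is simultaneously a maximal point for $M_1$, $M_2$, $M_1 + M_2$ and $M_1 \cap M_2$, available off a proper analytic subset of $\Omega$ by Lemma~1.4 of \cite{HSFT}. Since $\epsilon_\lambda(M_1 + M_2) = \epsilon_\lambda M_1 + \epsilon_\lambda M_2$, the elementary vector-space identity gives
\[
{\rm fd}(M_1) + {\rm fd}(M_2) = {\rm fd}(M_1 + M_2) + \dim(\epsilon_\lambda M_1 \cap \epsilon_\lambda M_2),
\]
so the theorem reduces to proving the inclusion $\epsilon_\lambda M_1 \cap \epsilon_\lambda M_2 \subseteq \epsilon_\lambda(M_1 \cap M_2)$; the reverse containment is automatic.

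The crucial technical ingredient is the \emph{kernel identity}
\[
\ker(\epsilon_\lambda|_N) = \sum_{j=1}^n (\lambda_j - M_{z_j}) N^n
\]
at a common maximal point $\lambda$, valid for each of $N = M_1$, $N = M_2$ and $N = M_1 + M_2$. The inclusion $\supseteq$ is automatic, and the reverse follows, as in the proof of Lemma~\ref{kernelpointev}, once one knows that $\dim N/\sum_j (\lambda_j - M_{z_j}) N^n$ coincides generically with ${\rm fd}(N)$. This is precisely where the hypotheses of the theorem enter: the density of $A$ in $M_i$, combined with $AM_i \subseteq M_i$, permits the Koszul-division step by which an element of $M_i$ vanishing at $\lambda$ is factored through $(\lambda_j - z_j)$'s while staying inside $M_i$, by approximating with coordinates in $A$ and using the $A$-action to carry the factorization back from $\mathcal{O}(\Omega,D)$ into $M_i$.

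Granting the kernel identities, the proof is completed by an explicit cohomological lift. Given $v \in \epsilon_\lambda M_1 \cap \epsilon_\lambda M_2$, write $v = f(\lambda) = g(\lambda)$ with $f \in M_1$ and $g \in M_2$. Then $f - g \in M_1 + M_2$ vanishes at $\lambda$, so the kernel identity for $M_1 + M_2$ together with the additivity $\sum_j (\lambda_j - M_{z_j})(M_1 + M_2)^n = \sum_j (\lambda_j - M_{z_j}) M_1^n + \sum_j (\lambda_j - M_{z_j}) M_2^n$ yields tuples $(a_j) \in M_1^n$ and $(b_j) \in M_2^n$ with
\[
f - g = \sum_{j=1}^n (\lambda_j - M_{z_j}) a_j + \sum_{j=1}^n (\lambda_j - M_{z_j}) b_j.
\]
Setting $h := f - \sum_j (\lambda_j - M_{z_j}) a_j = g + \sum_j (\lambda_j - M_{z_j}) b_j$ produces an element of $M_1 \cap M_2$ with $h(\lambda) = v$, as required.

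The main obstacle is the kernel identity: generically matching the codimension of $\sum_j (\lambda_j - M_{z_j}) N^n$ with ${\rm fd}(N)$ is the delicate step, and it is exactly what the density hypothesis on $A$ together with $AM_i \subseteq M_i$ is designed to supply. Once this is in place the remainder of the argument is essentially formal.
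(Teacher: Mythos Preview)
Your reduction is correct: once one knows the inequality from Lemma~\ref{latticegreater}, everything comes down to showing $\epsilon_\lambda M_1 \cap \epsilon_\lambda M_2 \subseteq \epsilon_\lambda(M_1\cap M_2)$ at a common maximal point, and your lifting step (produce $h$ from $f-g$) would indeed finish the argument \emph{if} the kernel identity
\[
\ker(\epsilon_\lambda|_{M_1+M_2})=\sum_{j=1}^n(\lambda_j-z_j)(M_1+M_2)
\]
held at such a point. The gap is that this identity is \emph{false} in general under the stated hypotheses, and your justification (``the density of $A$ \ldots\ permits the Koszul-division step'') does not establish it. Here is a concrete counterexample. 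Take $n=1$, $\Omega=\mathbb{D}$, $D=\C$, $A=\C[z]$, and set $M_1=\C[z]$, $M_2=\C[z]+\C[z]\,e^{z}$. Both are $\C[z]$-submodules with $AM_i\subseteq M_i$, and $A$ is dense in each: every $p+q\,e^{z}\in M_2$ is the pointwise limit of the polynomials $p+q\sum_{k\le K}z^k/k!\in\C[z]\subseteq M_2$. Now $M_1+M_2=M_2$, and for any $\lambda\in\mathbb{D}$ the function $g=e^{z}-e^{\lambda}\in M_2$ vanishes at $\lambda$, yet $g\notin(\lambda-z)M_2$: writing $e^{z}-e^{\lambda}=(\lambda-z)(p+q\,e^{z})$ with $p,q\in\C[z]$ and comparing the $e^{z}$-coefficients over $\C(z)$ forces $(\lambda-z)q=-1$, impossible. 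So the kernel identity fails at \emph{every} point of $\Omega$, not merely on a thin set. Your appeal to Lemma~\ref{kernelpointev} does not help either, since that lemma presupposes constancy of $\dim N/\sum_j(\lambda_j-z_j)N$, which is not part of the hypotheses here.

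The paper's proof avoids the kernel identity entirely and proceeds by an explicit determinantal construction in the spirit of Cramer's rule. One chooses functions $h_i\in M=M_1+M_2$ with $h_i(\lambda)=e_i$ and \emph{coordinates in $A$}, forms the matrix $\theta=(h_{ij})$, and uses the adjugate to build new functions $H_i\in M$ (obtained by multiplying the $h_\nu$ by elements of $A$, which stays in $M$ because $AM\subseteq M$) whose values form a basis of $M_z$ off $Z(\det\theta)$. This yields the key rigidity statement that any $f\in M$ is determined by its first $d$ coordinate functions. One then repeats the trick with bases $F_i\in M_1$, $G_i\in M_2$ (again with coordinates in $A$, available by density), assembles another matrix $\Delta$, and uses a second adjugate computation to exhibit explicit $A$-linear combinations of the $F_i$ that agree, in their first $d$ coordinates, with explicit $A$-linear combinations of the $G_i$; by the rigidity statement these functions coincide and hence lie in $M_1\cap M_2$, with values at $\lambda$ spanning a space of dimension $d'=\dim((M_1)_\lambda\cap(M_2)_\lambda)$. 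The role of the hypotheses is thus not to enable a division/factorization inside $M_i$, but to guarantee enough elements with $A$-coefficients so that all the matrix algebra (determinants, adjugates) produces multipliers in $A$, and then $AM_i\subseteq M_i$ keeps the resulting combinations inside $M_1$ and $M_2$ respectively.
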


\begin{proof}
Exactly as in the proof of Lemma 4.1 it follows that
\begin{displaymath}
{\rm fd}(M_1 + M_2) + {\rm fd}(M_1 \cap M_2) \leq {\rm fd}(M_1) + {\rm fd} (M_2).
\end{displaymath}
To prove the reverse inequality it suffices to check that the arguments used in \cite{CF} to prove the corresponding result for invariant subspaces of analytic functional Hilbert spaces $H(K)$ given by a complete Nevanlinna-Pick kernel on a domain in $\C$ remain valid. For the convenience of the reader, we indicate the main ideas. \\
Define $M = M_1 + M_2$ and choose a point $\lambda \in \Omega$ which is maximal with respect to $M_1$, $M_2$ and $M$. Define $E = (M_1)_{\lambda} \cap (M_2)_{\lambda}$ and choose direct complements $E_1$ of $E$ in $(M_1)_{\lambda}$ and $E_2$ of $E$ in $(M_2)_{\lambda}$. Fix bases $(e_1,...,e_{d_1})$ of $E_1$, $(e_{d_1 + 1},..., e_{d_1 + d_2})$ for $E_2$ and $(e_{d_1 + d_2 + 1},...,d_{d_1 + d_2 + d'})$ for $E$, where $d_1,d_2,d' \geq 0$ are non-negative integers. Set $d = d_1 + d_2 + d'$. An elementary argument shows that $(e_1,...,e_d)$ is a basis of $M_{\lambda}$. Let us complete this basis to a basis $B = (e_1,...,e_d,e_{d+1},...,e_N)$ of $D$. Since $\text{fd}(M_1) + \text{fd}(M_2) - \text{fd}(M) = d'$, we have to show that 
\begin{displaymath}
\text{fd}(M_1 \cap M_2) \geq d'.
\end{displaymath}
We may of course assume that $d' \neq 0$. Since $A$ is dense in $M$, there are functions $h_1,...,h_d \in M$ with
\begin{displaymath}
h_i(\lambda) = e_i \quad \quad (i=1,...,d)
\end{displaymath}
such that each $h_i$ has coefficients in $A$. Write
\begin{displaymath}
h_i = \sum_{j=1}^N h_{ij} e_j \quad \quad (i=1,...,d).
\end{displaymath}
Then $\theta = (h_{ij})_{1 \leq i,j \leq d}$ is a $(d \times d)$-matrix with entries in $A$ such that $\theta(\lambda) = E_d$ is the unit matrix. By basic linear algebra there is a $(d \times d)$-matrix $(A_{ij})$ with entries in $A$ such that $(A_{ij}) \theta = \text{diag}(\det \theta)$ is the $(d \times d)$-diagonal matrix with all diagonal terms equal to $\det(\theta)$. Then
\begin{displaymath}
(A_{ij})_{1 \leq i,j \leq d} (h_{ij})_{\substack{1 \leq i \leq d \\ 1 \leq j \leq N}} = (\text{diag}(\det \theta), (g_{ij})),
\end{displaymath}
where $(g_{ij})$ is a suitable matrix with entries in $A$. We define functions $H_1,...,H_d \in M$ by setting
\begin{displaymath}
H_i = \det (\theta) e_i + \sum_{j=1}^{N-d} g_{ij} e_{d+j} = \sum_{j=1}^N (\sum_{\nu = 1}^d A_{i \nu} h_{\nu j}) e_j = \sum_{\nu = 1}^d A_{i \nu} h_{\nu}.
\end{displaymath}
By construction $H_i(\lambda) = e_i$ and $(H_1(z),...,H_d(z))$ is a basis of $M_z$ for every point $z \in \Omega$ with $\det (\theta(z)) \neq 0$. 
If $f = f_1e_1+....+f_N e_N \in M$ is arbitrary, then at each point $z \in \Omega$  which is not contained in the zero set $Z(\det (\theta))$ of the analytic function $\det (\theta) \in \mathcal{O}(\Omega)$, the function $f$ can be written as a linear combination
\begin{displaymath}
f(z) = \lambda_1(z, f) H_1(z) + ... + \lambda_d(z, f) H_d(z).
\end{displaymath}
Using the definition of the functions $H_i$, we find that
\begin{displaymath}
f_1 = \lambda_1 (\cdot, f) \det ( \theta),..., f_d = \lambda_d(\cdot, f) \det(\theta).
\end{displaymath}
Hence, for $j = d+1,...,N$ and $z \in \Omega \backslash Z(\det \theta)$, we obtain that
\begin{align*}
f_j(z) &= \lambda_1(z,f) g_{1,j-d}(z) + ... + \lambda_d(z,f) g_{d, j-d}(z) \\
&= \frac{g_{1, j-d}(z)}{\det \theta(z)} f_1(z) + .... + \frac{g_{d,j-d}(z)}{\det \theta(z)} f_d(z).
\end{align*}
In particular, each function $f = f_1 e_1 + ... + f_N e_N \in M$ is uniquely determined by its first $d$ coordinate functions $(f_1,...,f_d)$. \\

Since $A$ is dense in $M_1$ and in $M_2$, we can choose functions $F_1,...,F_{d_1 + d'} \in M_1$ and $G_1,...,G_{d_2 + d'} \in M_2$ with coefficients in $A$ such that
\begin{displaymath}
(F_i(\lambda))_{i=1,...,d_1+d'} = (e_1,...,e_{d_1},e_{d_1 + d_2 + 1},...,e_{d_1 + d_2 + d'})
\end{displaymath}
and
\begin{displaymath}
(G_i(\lambda))_{i=1,...,d_2+d'} = (e_{d_1+1},...,e_{d_1 + d_2 + d'}).
\end{displaymath}
Write the first $d$ coordinate functions of each of the functions $$F_1,...,F_{d_1},G_1,...,G_{d_2}, F_{d_1+1},...,F_{d_1+d'},G_{d_2+1},...,G_{d_2+d'}$$ with respect to the basis $(e_1,...,e_N)$ of $D$ as column vectors and arrange these column vectors to a matrix $\Delta$ in the indicated order. Then $\Delta$ is a $(d \times (d+d'))$-matrix with entries in $A$. Write $\Delta = (\Delta_0, \Delta_1)$ where $\Delta_0$ is the $(d \times d)$-matrix consisting of the first $d$ columns of $\Delta$ and $\Delta_1$ is the $(d \times d')$-matrix consisting of the last $d'$ columns of $\Delta$. \\
By construction we have $\det (\Delta_0(\lambda)) = 1$. On $\Omega \backslash Z(\det \Delta_0)$, we can write
\begin{displaymath}
(\det \Delta_0) \Delta_0^{-1} \Delta = (\text{diag}(\det \Delta_0), \Gamma),
\end{displaymath}
where $\text{diag}(\det \Delta_0)$ is the $(d \times d)$-diagonal matrix with all diagonal terms equal to $\det \Delta_0$ and $\Gamma = (\gamma_{ij})$ is a $(d \times d')$-matrix with entries in $A$. The column vectors
\begin{displaymath}
r_j = (\gamma_{1j},...,\gamma_{dj},0,...,0, - \det \Delta_0,0,...,0)^t \quad (j = 1,...,d'),
\end{displaymath}
where $- \det \Delta_0$ is the entry in the $(d+j)$-th position, satisfy the equations
\begin{displaymath}
(\det \Delta_0) \Delta_0^{-1} \Delta r_j = ((\det \Delta_0) \gamma_{ij} - (\det \Delta_0) \gamma_{ij})_{i=1}^d = 0
\end{displaymath}
on $\Omega \backslash Z(\det \Delta_0)$. Hence $\Delta r_j = 0$ for $j = 1,...,d'$, or equivalently, for each $j=1,...,d$, the first $d$ coordinate functions of
\begin{displaymath}
\gamma_{1j} F_1 + ... + \gamma_{d_1 j} F_{d_1} + \gamma_{d_1 + d_2 + 1, j} F_{d_1 + 1} + ... + \gamma_{d_1 + d_2 + d', j} F_{d_1 + d'}
\end{displaymath}
with respect to $(e_1,...,e_N)$ coincide with those of 
\begin{displaymath}
(\det \Delta_0) G_{d_2 + j} - \gamma_{d_1 + 1, j} G_1 - ... - \gamma_{d_1 + d_2, j} G_{d_2}.
\end{displaymath}
Since, for each $j$, both functions belong to $M$, they coincide. But then these functions belong to $M_1 \cap M_2$. Since the vectors
\begin{displaymath}
G_i(\lambda) = e_{d_1 + i} \quad \quad (i = 1,...,d_2+ d')
\end{displaymath}
are linearly independent and since $\det (\Delta_0(\lambda)) = 1$, it follows that ${\rm fd}(M_1 \cap M_2) = \dim(M_1 \cap M_2)_{ \lambda} \geq d'$.
\end{proof}

Suppose for the moment that $\Omega \subseteq \C^n$ is a Runge domain. Since by the Oka-Weil approximation theorem,
the polynomials are dense in $\mathcal{O}(\Omega)$ with respect to the Fr\'{e}chet space topology of uniform 
convergence on compact subsets, each $\C[z]$-submodule $M \subseteq \mathcal{O}(\Omega, D)$ which is closed with respect to the Fr\'{e}chet space topology of $\mathcal{O}(\Omega, D)$ is automatically an $\mathcal{O}(\Omega)$-submodule. Hence we obtain the following consequence of Theorem \ref{generalequality}.

\begin{corollary}
Let $\Omega \subseteq \C^n$ be a Runge domain and let $D$ be a finite-dimensional complex vector space. Then the fiber dimension formula
\begin{displaymath}
{\rm fd}(M_1 + M_2) + {\rm fd}(M_1 \cap M_2) = {\rm fd}(M_1) + {\rm fd}(M_2)
\end{displaymath}
holds for each pair of closed $\C[z]$-submodules $M_1, M_2$ of the Fr\'{e}chet space $\mathcal{O}(\Omega, D)$.
\end{corollary}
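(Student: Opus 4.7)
The plan is to apply Theorem \ref{generalequality} with the simplest possible choice of subalgebra, namely $A = \mathcal{O}(\Omega)$ itself, viewed as a subalgebra of $\mathcal{O}(\Omega)$. Once this choice is made, both hypotheses of Theorem \ref{generalequality} become either automatic or immediate consequences of the observation highlighted in the paragraph preceding the corollary.

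First I would check the density hypothesis. With $A = \mathcal{O}(\Omega)$, every function $f \in M_i$ has coordinate functions in $A$ with respect to any basis of $D$ by definition, so the constant sequence $f_k = f$ trivially witnesses pointwise density of $A$ in $M_i$. No actual approximation needs to be performed at this step.

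Next I would verify the module condition $A M_i \subseteq M_i$ for $i = 1,2$. This is exactly the content of the remark preceding the corollary, and it is the only place where the Runge hypothesis on $\Omega$ is used. Concretely, given $h \in \mathcal{O}(\Omega)$ and $f \in M_i$, the Oka--Weil theorem produces polynomials $p_k \to h$ in the Fr\'echet topology of uniform convergence on compacta; since $M_i$ is a $\C[z]$-submodule, each $p_k f$ belongs to $M_i$, and continuity of multiplication by $f$ in $\mathcal{O}(\Omega, D)$ yields $p_k f \to h f$ in the Fr\'echet topology. Closedness of $M_i$ then forces $h f \in M_i$, as required.

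With both hypotheses verified, Theorem \ref{generalequality} applied to the pair $M_1, M_2$ produces the desired identity
\[
{\rm fd}(M_1 + M_2) + {\rm fd}(M_1 \cap M_2) = {\rm fd}(M_1) + {\rm fd}(M_2)
\]
without further work. There is no genuine obstacle here; the corollary is a direct specialization of the theorem, with the Runge assumption serving exclusively to upgrade each closed $\C[z]$-submodule to an $\mathcal{O}(\Omega)$-submodule, after which the density requirement becomes vacuous.
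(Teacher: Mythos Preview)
Your proposal is correct and follows exactly the approach sketched in the paragraph preceding the corollary: take $A=\mathcal{O}(\Omega)$, use Oka--Weil plus closedness to upgrade each closed $\C[z]$-submodule to an $\mathcal{O}(\Omega)$-submodule (so $AM_i\subseteq M_i$), note that density is then vacuous, and apply Theorem~\ref{generalequality}. There is nothing to add.
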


Suppose that $T \in L(X)^n$ is a dual Cowen-Douglas tuple of rank $N$ on a domain $\Omega \subseteq \C^n$. Choose a CF-representation
\begin{displaymath}
\rho: X \rightarrow \mathcal{O}(\Omega_0, D)
\end{displaymath}
of $T$ as in the proof of Theorem \ref{CFexists}. Let $M \in {\rm Lat}(T)$ be an invariant subspace of $T$ such that each vector $m \in M$ is the limit of a sequence of vectors in
\begin{displaymath}
M \cap {\rm span}\{ T^{\alpha} x; \alpha \in \N^n \text{ and } x \in D \}.
\end{displaymath}
Then $\rho(M) \subseteq \mathcal{O}(\Omega_0, D)$ is a $\C[z]$-submodule in which the polynomials are dense in the sense explained in the section leading to Theorem \ref{generalequality}. Hence, for any two invariant subspaces $M_1, M_2 \in {\rm Lat}(T)$ of this type, the fiber dimension formula 
\begin{align*}
{\rm fd}(M_1 + M_2) + & {\rm fd} (M_1 \cap M_2) = {\rm fd}(\rho(M_1) + \rho(M_2)) + {\rm fd}(\rho(M_1) \cap \rho(M_2)) \\
&= {\rm fd}(\rho(M_1)) + {\rm fd}(\rho(M_2)) = {\rm fd}(M_1) + {\rm fd}(M_2)
\end{align*}
holds. The above density condition on $M$ is trivially fulfilled for every closed $T$-invariant subspace $M$ which is generated by a subset of $D$. But there are other situations to which this observation applies.

Recall that a commuting tuple $T \in L(H)^n$ of bounded operators on a complex Hilbert space $H$ is called graded if $H = \bigoplus_{k=0}^{\infty} H_k$ is the orthogonal sum of closed subspaces $H_k \subseteq H$ such that $\dim H_0 < \infty$ and 
\begin{enumerate}[(i)]
\item $T_j H_k \subseteq H_{k+1}$ \quad \quad (k$ \geq 0, j = 1,...,n$),
\item $\sum_{j=1}^n T_j H \subseteq H$ is closed,
\item $\bigvee_{\alpha \in \N^n} T^{\alpha} H_0 = H$.
\end{enumerate}
It is elementary to show (Lemma 2.4 in \cite{Groth}) that under these hypotheses the identities 
\begin{displaymath}
\sum_{\vert \alpha \vert = k} T^{\alpha} H = \bigoplus_{j=k}^{\infty} H_j \text{ and } \sum_{\vert \alpha \vert = k} T^{\alpha} H_0 = H_k
\end{displaymath}
hold for all integers $k \geq 0$. By definition a closed invariant subspace $M \in {\rm Lat}(T)$ of a graded tuple $T \in L(H)^n$ is said to be homogeneous if
\begin{displaymath}
M = \bigoplus_{k=0}^{\infty} M \cap H_k.
\end{displaymath}

\begin{corollary}
\label{homogen}
Let $T \in L(H)^n$ be a graded dual Cowen-Douglas tuple on a domain $\Omega \subseteq \C^n$. Then the fiber dimension formula
\begin{displaymath}
{\rm fd}(M_1 + M_2) + {\rm fd}(M_1 \cap M_2) = {\rm fd}(M_1) + {\rm fd}(M_2)
\end{displaymath}
holds for any pair of homogeneous invariant subspaces $M_1, M_2 \in {\rm Lat}(T)$.
\end{corollary}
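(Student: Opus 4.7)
The plan is to reduce to the density setup of Theorem~\ref{generalequality} via an appropriately chosen CF-representation. First, I would work at the base point $\lambda_0 = 0$ (which lies in $\Omega$ in the natural graded setting, since $H/\sum_i T_i H \cong H_0$ forces $N = \dim H_0$) and take $D = H_0$ in the construction of Theorem~\ref{CFexists}, using that $H = H_0 \oplus \sum_i T_i H = H_0 \oplus (0 - T) H^n$ by the graded decomposition. This produces a continuous CF-representation $\rho : H \to \mathcal{O}(\Omega_0, H_0)$ satisfying $\rho(x) = x$ for $x \in H_0$ and hence $\rho(T^{\alpha} x) = z^{\alpha} x$ for all $\alpha \in \N^n$, $x \in H_0$.

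The main step is to verify the density condition identified in the paragraph preceding the corollary: that every vector $m$ in a homogeneous $M \in \text{Lat}(T)$ is a limit of vectors in $M \cap \text{span}\{T^{\alpha} x : \alpha \in \N^n, x \in H_0\}$. For $m \in M = \bigoplus_k (M \cap H_k)$ the partial sums $s_K = \sum_{k=0}^K m_k$ of the orthogonal expansion $m = \sum_k m_k$ lie in $M$, converge to $m$ in $H$, and each $m_k \in H_k = \sum_{|\alpha|=k} T^{\alpha} H_0$ (cited from Lemma 2.4 in \cite{Groth}) is automatically a finite linear combination of elements $T^{\alpha} x$ with $x \in H_0$. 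Transporting through the continuous map $\rho$, the images $\rho(s_K)$ are polynomial $H_0$-valued functions converging to $\rho(m)$ in $\mathcal{O}(\Omega_0, H_0)$, so $\C[z]$ is dense in $\rho(M)$; moreover $\C[z] \cdot \rho(M) \subseteq \rho(M)$ by $M_z$-invariance.

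With this density established for both $\rho(M_1)$ and $\rho(M_2)$, Theorem~\ref{generalequality} applied with $A = \C[z]$ yields
\[
{\rm fd}(\rho(M_1) + \rho(M_2)) + {\rm fd}(\rho(M_1) \cap \rho(M_2)) = {\rm fd}(\rho(M_1)) + {\rm fd}(\rho(M_2)).
\]
It remains to transfer this identity back to $M_1, M_2$: since $T$ is a dual (not merely weak dual) Cowen-Douglas tuple, $\ker \rho = \bigcap_{z \in \Omega}(z-T)H^n = \{0\}$, so $\rho$ is injective and $\rho(M_1)+\rho(M_2) = \rho(M_1+M_2)$, $\rho(M_1)\cap\rho(M_2) = \rho(M_1\cap M_2)$. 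Combined with the closure-invariance ${\rm fd}(Y) = {\rm fd}(\overline{Y})$ noted after Theorem~\ref{Invariance}, this delivers the claimed lattice formula.

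The main obstacle I expect is the alignment of the CF-representation with the grading: the density argument relies essentially on choosing $D = H_0$ at $\lambda_0 = 0$, so that $\rho$ sends the generators $T^{\alpha} H_0$ to the genuine polynomial functions $z^{\alpha} x$. Once this alignment is arranged, the homogeneity hypothesis on $M_1, M_2$ supplies the required polynomial approximations essentially for free, and everything else fits the template of the paragraph preceding the corollary.
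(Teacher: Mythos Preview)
Your proposal is correct and follows essentially the same route as the paper: choose $D = H_0$ in the CF-representation of Theorem~\ref{CFexists}, use homogeneity of $M$ together with $H_k = \sum_{|\alpha|=k} T^{\alpha} H_0$ to verify the polynomial-density hypothesis of the paragraph preceding the corollary, and then invoke Theorem~\ref{generalequality} with $A = \C[z]$ and the injectivity of $\rho$ to pull the identity back to $M_1, M_2$. Your added remarks (the explicit formula $\rho(T^{\alpha}x) = z^{\alpha}x$, the closure-invariance step) are correct elaborations of points the paper leaves implicit.
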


\begin{proof}
By the remarks preceding the corollary
\begin{displaymath}
H = (\sum_{j=1}^n T_jH) \oplus H_0.
\end{displaymath}
Hence in the proof of Theorem \ref{CFexists} we can choose $D = H_0$. Let $\rho: H \rightarrow \mathcal{O}(\Omega_0, H_0)$ be a CF-representation of $T$ as constructed in the proof of Theorem \ref{CFexists}. Let $M \in {\rm Lat}(T)$ be a homogeneous invariant subspace for $T$. Then each element $m \in M$ can be written as a sum $m = \sum_{k=0}^{\infty} m_k$ with
\begin{displaymath}
m_k \in M \cap \sum_{\vert \alpha \vert = k} T^{\alpha} H_0 \quad (k \in \N).
\end{displaymath}
Hence the assertion follows from the remarks preceding Corollary \ref{homogen}.
\end{proof}

Typical examples of graded dual Cowen-Douglas tuples are multiplication tuples $M_z = (M_{z_1},...,M_{z_n}) \in L(H)^n$ with the coordinate functions on analytic functional Hilbert spaces $H = H(K_f, \C^N)$ given by a reproducing kernel
\begin{displaymath}
K_f: B_r(a) \times B_r(a) \rightarrow L(\C^n), K_f(z,w) = f(\langle z, w \rangle) 1_{\C^N},
\end{displaymath}
where $f(z) = \sum_{n=0}^{\infty} a_n z^n$ is a one-variable power series with radius of convergence $R = r^2 > 0$ such that $a_0 = 1$, $a_n > 0$ for all $n$ and
\begin{displaymath}
0 < \inf_{n \in \N} \frac{a_n}{a_{n+1}} \leq \sup_{n \in \N} \frac{a_n}{a_{n+1}} < \infty
\end{displaymath}
(see \cite{GHX} or \cite{Wernet}). In this case $H$ is the orthogonal sum 
\begin{displaymath}
H = \bigoplus_{k=0}^{\infty} \mathbb{H}_k \otimes \C^N
\end{displaymath}
of the subspaces consisting of all homogeneous $\C^N$-valued polynomials of degree $k$ and every invariant subspace
\begin{displaymath}
M = \bigvee_{i=1}^r \C[z] p_i \in {\rm Lat}(M_z)
\end{displaymath}
generated by a finite set of homogeneous polynomials $p_i \in \mathbb{H}_{k_i} \otimes \C^N$ is homogeneous. This class of examples contains the Drury-Arveson space, the Hardy space and the weighted Bergman spaces on the unit ball. \\

Let $H = H(K) \subseteq \mathcal{O}(\Omega)$ be an analytic functional Hilbert space on a domain $\Omega \subseteq \C^n$, or equivalently, a functional Hilbert space given by an analytic reproducing kernel $K: \Omega \times \Omega \rightarrow \C$. Let $D$ be  a finite-dimensional complex Hilbert space. Then the $D$-valued functional Hilbert space $H(K_D) \subseteq \mathcal{O}(\Omega, D)$ given by the kernel
\begin{displaymath}
K_D: \Omega \times \Omega \rightarrow L(D), K_D(z,w) = K(z,w) 1_D
\end{displaymath}
can be identified with the Hilbert space tensor product $H(K) \otimes D$. Let us denote by $M(H) = \{ \varphi : \Omega \rightarrow \C; \varphi H \subseteq H \}$ the multiplier algebra of $H$.

\begin{corollary}

Suppose that $H = H(K)$ contains all constant functions and that $z_1,...,z_n \in M(H)$.

\begin{enumerate}[(a)]
\item For any pair of closed subspaces $M_1, M_2 \subseteq H(K_D)$ such that $M(H)M_i \subseteq M_i$ for $i=1,2$ and such that $M(H)$ is dense in $M_1$ and $M_2$, the fiber dimension formula
\begin{displaymath}
{\rm fd}(M_1 \vee M_2) + {\rm fd}(M_1 \cap M_2) = {\rm fd}(M_1) + {\rm fd}(M_2)
\end{displaymath}
holds.
\item If in addition $K$ is a complete Nevanlinna-Pick kernel, that is, $K$ has no zeros and also the mapping $1 - \frac{1}{K}$ is positive definite,
then the fiber dimension formula holds for all closed subspaces $M_1, M_2 \subseteq H(K_D)$ which are invariant for $M(H)$.
\end{enumerate}

\end{corollary}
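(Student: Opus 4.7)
The plan is to deduce both parts from Theorem \ref{generalequality} applied with the subalgebra $A = M(H)$. Since $z_1,\ldots,z_n \in M(H)$ and $M(H)$ is a unital subalgebra of $\mathcal{O}(\Omega)$, we have $\C[z] \subseteq M(H)$, so any $M(H)$-invariant subspace of $H(K_D)$ is automatically a $\C[z]$-submodule of $\mathcal{O}(\Omega,D)$ of the kind to which Theorem \ref{generalequality} applies.

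For part (a), the assumptions $M(H)M_i \subseteq M_i$ and the density of $M(H)$ in $M_i$ are exactly the hypotheses of Theorem \ref{generalequality}, which yields
\[
{\rm fd}(M_1 + M_2) + {\rm fd}(M_1 \cap M_2) = {\rm fd}(M_1) + {\rm fd}(M_2).
\]
To convert the left-hand side to $M_1 \vee M_2$, I use that the point evaluations $\epsilon_\lambda : H(K_D) \rightarrow D$ are continuous and $D$ is finite-dimensional: $\epsilon_\lambda(M_1 + M_2)$ is already closed in $D$, and by continuity $\epsilon_\lambda(M_1 \vee M_2) \subseteq \overline{\epsilon_\lambda(M_1 + M_2)} = \epsilon_\lambda(M_1 + M_2)$, so ${\rm fd}(M_1 \vee M_2) = {\rm fd}(M_1 + M_2)$.

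For part (b), the task reduces to verifying, for each closed $M(H)$-invariant subspace $M \subseteq H(K_D)$, that $M(H)$ is dense in $M$ in the pointwise sense required by Theorem \ref{generalequality}. I would invoke the Beurling-type theorem for complete Nevanlinna-Pick kernels (McCullough--Trent, Greene--Richter--Sundberg): there exist an auxiliary Hilbert space $D'$ and a partially isometric multiplier $\Phi : H(K) \otimes D' \rightarrow H(K) \otimes D$ with $M = \Phi\, H(K_{D'})$. Given $f = \Phi g \in M$, I approximate $g \in H(K_{D'})$ in norm by finite sums $g_k = \sum_{j=1}^{N_k} p_{k,j} \otimes e_j$ with polynomials $p_{k,j} \in \C[z]$ and $e_j$ ranging over a finite-dimensional subspace of $D'$; this relies on the density of polynomials in $H(K)$, which is standard under the hypotheses that $H$ contains the constants, $z_i \in M(H)$, and $K$ is complete NP. The images $f_k = \Phi g_k \in M$ then have coordinates that are finite sums of products of polynomials and entries of $\Phi$, hence lie in $M(H)$, and $f_k \rightarrow f$ in norm and therefore pointwise.

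The main obstacle is part (b): invoking the vector-valued Beurling representation in the complete NP setting and justifying the norm-density of polynomial-coefficient vectors in $H(K_{D'})$ (where one may arrange $D'$ to be separable). Once this density is in place, (b) reduces to (a), and (a) is essentially immediate from Theorem \ref{generalequality} together with the passage from $+$ to $\vee$ described above.
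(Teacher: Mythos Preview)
Your approach is essentially the paper's: part (a) follows directly from Theorem \ref{generalequality} with $A = M(H)$ (and the passage from $M_1 + M_2$ to $M_1 \vee M_2$ is exactly the argument the paper made earlier, after Theorem \ref{Invariance}, that ${\rm fd}(Y) = {\rm fd}(\overline{Y})$), while part (b) reduces to (a) via the McCullough--Trent Beurling theorem.

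One comment on (b): you route the approximation through density of \emph{polynomials} in $H(K)$, which you call ``standard'' under the given hypotheses, but this is not automatic for an arbitrary complete Nevanlinna--Pick kernel on a domain $\Omega \subseteq \C^n$. The paper avoids this detour by citing the McCullough--Trent theorem (Theorem 8.67 in \cite{AMC}, Theorem 3.3.8 in \cite{B}) as directly yielding that $M(H)$ is dense in every closed $M(H)$-invariant subspace of $H(K_D)$; in your language, one approximates $g$ in $H(K_{D'})$ by vectors with \emph{multiplier} (rather than polynomial) coefficients, which is what the Beurling representation actually provides. With that adjustment your argument and the paper's coincide.
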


\begin{proof}
Part (a) is a direct consequence of Theorem \ref{generalequality}. If $K$ is a complete Nevanlinna-Pick kernel, then the Beurling-Lax-Halmos theorem for Nevanlinna-Pick spaces proved by McCullough and Trent (see Theorem 8.67 in \cite{AMC} or Theorem 3.3.8 in \cite{B}) implies that $M(H)$ is dense in every closed subspace $M \subseteq H(K_D)$ which is invariant for $M(H)$.

\end{proof}

Note that the condition that $M(H)$ is dense in a subspace $M \subseteq H(K_D)$ is satisfied for every closed $M(H)$-invariant subspace $M\subseteq H(K_D)$ that is generated by an arbitrary family of functions $f_i: \Omega \rightarrow D$ ($i \in I$) with coefficients in $M(H)$. Part (b) for domains $\Omega \subseteq \C$ was proved in \cite{CCF}. The proof in the multivariable case is the same.

{\small J\"{o}rg Eschmeier\\
Fachrichtung Mathematik\\
Universit\"{a}t des Saarlandes \\
Postfach 151150\\ D-66041 Saarbr\"{u}cken, Germany \\
e-mail: eschmei@math.uni-sb.de}\\

\vspace{.2cm}
{\small Sebastian Langend\"{o}rfer\\
Fachrichtung Mathematik\\
Universit\"{a}t des Saarlandes\\
Postfach 151150\\ D-66041 Saarbr\"{u}cken, Germany \\
e-mail: langendo@math.uni-sb.de}

\end{document}